\newcommand*{\mydoi}[1]{\href{http://dx.doi.org/#1}{\includegraphics[width=.75em]{doi.png}}}
\newcommand{\KM}{Krasnosel’skii–Mann\xspace}
\newcommand{\suchthat}{\;\ifnum\currentgrouptype=16 \middle\fi|\;}
\newcommand{\until}[1]{\{1,\dots, #1\}}
\newcommand{\subscr}[2]{#1_{\textup{#2}}}
\newcommand{\setdef}[2]{\{#1 \; | \; #2\}}
\newcommand{\map}[3]{#1: #2 \rightarrow #3}
\newcommand{\real}{\mathbb{R}}
\newcommand{\realpositive}{\mathbb{R}_{>0}}
\newcommand{\realnonnegative}{\mathbb{R}_{\geq0}}
\newcommand{\complex}{\mathbb{C}}
\newcommand{\scirc}{\raise1pt\hbox{$\,\scriptstyle\circ\,$}}
\newcommand{\prox}{\mathrm{prox}}
\newcommand\oprocendsymbol{\hbox{$\square$}}
\newcommand\oprocend{\relax\ifmmode\else\unskip\hfill\fi\oprocendsymbol}
\definecolor{Gray}{gray}{0.9}
\newcommand{\bigO}{\mathcal{O}}
\newcommand{\sashatodo}[1]{\par\noindent{\raggedright\color{cyan}\texttt{From
			Sasha: #1}\par\marginpar{$\star$}}}
\DeclareSymbolFont{bbold}{U}{bbold}{m}{n}
\DeclareSymbolFontAlphabet{\mathbbold}{bbold}
\DeclareMathAlphabet{\mymathbb}{U}{BOONDOX-ds}{m}{n}
\newcommand{\vectorzeros}[1][]{\mymathbb{0}_{#1}}
\newcommand{\ds}{\displaystyle}
\newtheorem{theorem}{Theorem}
\newtheorem{proposition}[theorem]{Proposition}
\newtheorem{lemma}[theorem]{Lemma}
\newtheorem{definition}[theorem]{Definition}
\newtheorem{example}[theorem]{Example}
\newtheorem{remark}[theorem]{Remark}
\newtheorem{algo}[theorem]{Algorithm}
\newcommand{\jac}[1]{D\mkern-0.75mu{#1}}
\newcommand{\seminorm}[1]{{\left\vert\kern-0.25ex\left\vert\kern-0.25ex\left\vert #1
		\right\vert\kern-0.25ex\right\vert\kern-0.25ex\right\vert}}
\newcommand{\semimeasure}[1]{\mu_{\seminorm{\cdot}}\kern-0.5ex\left(#1\right)}
\newcommand{\inprod}[2]{\langle{#1},{#2}\rangle}
\newcommand{\osL}{\operatorname{osL}}
\newcommand{\Lip}{\operatorname{Lip}}
\newcommand{\norm}[2]{\|#1\|_{#2}}
\newcommand{\zero}{\operatorname{Zero}}
\newcommand{\fixed}{\operatorname{Fix}}
\DeclareMathOperator*{\argmin}{arg\,min}
\DeclareMathOperator{\dom}{Dom}
\newcommand{\diagL}{\operatorname{diagL}}
\DeclareSymbolFont{bbold}{U}{bbold}{m}{n}
\DeclareSymbolFontAlphabet{\mathbbold}{bbold}
\newcommand{\Id}{\mathsf{Id}}
\newcommand{\OF}{\mathsf{F}}
\newcommand{\OG}{\mathsf{G}}
\newcommand{\ON}{\mathsf{N}}
\newcommand{\OT}{\mathsf{T}}
\newcommand{\OC}{\mathsf{C}}
\newcommand{\OR}{\mathsf{R}}
\newcommand{\OJ}{\mathsf{J}}
\newcommand{\mcMV}{\mathcal{MV}}
\newcommand{\mcLD}{\mathcal{LD}}
\title{Non-Euclidean Monotone Operator Theory \\with Applications to Recurrent Neural Networks}
\author{Alexander Davydov$^{*}$, Saber Jafarpour$^{*}$, Anton
  V. Proskurnikov, and Francesco Bullo
    \thanks{$^*$ These authors contributed equally.}
  \thanks{This material is based upon work supported by the National Science Foundation Graduate Research Fellowship under Grant No.\ 2139319
    and AFOSR grant FA9550-22-1-0059.}
\thanks{Alexander Davydov and Francesco Bullo are with the Center for Control, Dynamical Systems,
	and Computation, University of California, Santa Barbara, 93106-5070, USA.
	{\tt\{davydov, bullo\}@ucsb.edu}. }
\thanks{Saber Jafarpour is with the School of Electrical and Computer Engineering, Georgia Institute of Technology.~\tt{saber@gatech.edu}.}
\thanks{Anton V. Proskurnikov is with the Department of Electronics and
	Telecommunications, Politecnico di Torino, Turin,
	Italy.~\tt{anton.p.1982@ieee.org}.}
}
\newcommand{\dmin}{\subscr{d}{min}}
\newcommand{\dmax}{\subscr{d}{max}}
\begin{document}
\maketitle

\begin{abstract}
  We provide a novel transcription of monotone operator theory to the
  non-Euclidean finite-dimensional spaces $\ell_1$ and $\ell_\infty$. We
  first establish properties of mappings which are monotone with respect
  to the non-Euclidean norms $\ell_1$ or $\ell_\infty$. In analogy with their
  Euclidean counterparts, mappings which are monotone with respect to a
  non-Euclidean norm are amenable to numerous algorithms for computing
  their zeros. We demonstrate that several classic iterative methods for
  computing zeros of monotone operators are directly applicable in the
  non-Euclidean framework. We present a case-study in the equilibrium
  computation of recurrent neural networks and demonstrate that casting the
  computation as a suitable operator splitting problem improves convergence
  rates.
\end{abstract}

\thispagestyle{empty}
\pagestyle{empty}

\section{Introduction}
In the last few years, monotone operator methods have become prevalent to
solve problems in optimization and control~\cite{AS:17,AB-ED-AS:19}, game
theory~\cite{LP:20}, systems analysis~\cite{TC-FF-RS:21}, and to better
understand machine learning models~\cite{EW-JZK:20,PLC-JCP:20b}. However,
monotone operator techniques are primarily based on the theory of Hilbert
and Euclidean spaces, while many problems are well-posed or better-suited
for analysis in a Banach space or finite-dimensional non-Euclidean
space. For example, in machine learning, it is known that robustness
analysis of artificial neural networks is naturally performed via the
$\ell_\infty$ norm and that such a norm is most appropriate for
high-dimensional input data such as images. Additionally, in the field of
robust control, $\mathcal{H}_\infty$ techniques are naturally stated over an
infinite-dimensional Banach space, so monotone operator techniques do not
apply.

\textit{Problem description and motivation:} In this paper, we aim to provide a natural transcription of many monotone operator techniques for computing zeros of monotone operators for operators which are naturally ``monotone" with respect 
to an $\ell_1$ or $\ell_\infty$ norm in a finite-dimensional space.

Monotone operator theory is a fertile field of nonlinear functional
analysis that generalizes the notion of monotone functions on $\real$ to
mappings on arbitrary Hilbert spaces and examines the properties of such maps. In
particular, an integral component of monotone operator theory is the design
of algorithms to compute zeros of monotone operators. This aspect makes
monotone operator theory compatible with convex optimization since the
subdifferential of any convex function is monotone and minimizing a convex
function is synonymous with finding a zero of its subdifferential. To this
end, there has been an extensive amount of work in the last decade in
applying monotone operator theory to convex optimization; e.g.,
see~\cite{EKR-SB:16,PLC:18,EKR-WY:21}.



Through the lens of duality theory, the theory of dissipative and accretive
operators on Banach spaces mirrors monotone operators on Hilbert
spaces to a degree~\cite{KL:85}. Despite these parallels, the theory of dissipative and
accretive operators has largely focused on iteratively computing solutions
of integral equations and PDEs in $L_p$ spaces for $p \neq 2$;
see~\cite{CC:09} for a relevant textbook. Moreover, many works in this
direction focus on Banach spaces that additionally have a uniformly smooth
or uniformly convex structure; this structure is not possesed by the
finite-dimensional $\ell_1$ and $\ell_\infty$ spaces. Ultimately, in
contrast to monotone operator theory over Hilbert spaces, the theory of
dissipative and accretive operators has found far fewer direct applications
to systems, control, and machine learning.

A notion similar to a monotone operator in a Hilbert space is that of a
contracting vector field~\cite{WL-JJES:98}. In fact, a vector field
$\map{\OF}{\real^n}{\real^n}$ is contracting with respect to an $\ell_2$
norm if and only if the negative vector field $-\OF$ is monotone when
thought of as an operator. However, vector fields are not restricted to
being contracting with respect to a Euclidean norm. In general, a vector
field may be contracting with respect to a non-Euclidean norm but not a
Euclidean one~\cite{ZA-EDS:14b}. Recently, there has been an increased
interest in studying vector fields that are contracting with respect to the
non-Euclidean norms $\ell_1$ and
$\ell_\infty$~\cite{ZA-EDS:14,SC:19,AD-SJ-FB:20o}. Due to the connection
between monotone operators and contracting vector fields, it is of interest
to explore the properties of operators that may be thought of as monotone
with respect to an $\ell_1$ or $\ell_\infty$ norm.

\textit{Contributions:} To facilitate the application of monotone
operator theory techniques to problems naturally arising in finite-dimensional
non-Euclidean spaces, we propose a novel non-Euclidean monotone operator
framework based on the theory of logarithmic norms~\cite{GS:06} (also known
as matrix measures). We use the logarithmic norm as a direct substitute for
inner-products in Hilbert spaces and we demonstrate that many classic
results from monotone operator theory directly carry over to their
non-Euclidean counterparts. Specifically, we show that the resolvent and
reflected resolvent operators of a non-Euclidean monotone operator have
properties analogous to those arising in Euclidean spaces.

Second, building upon the non-Euclidean monotone operator framework, we
demonstrate that classical iterative algorithms such as the forward step
method and proximal point method allow us to compute zeros of non-Euclidean monotone
operators in a manner identical to the procedure for traditional monotone operators. These results build upon both classical and modern works
on iterative methods for computing fixed points of non-expansive maps in
Banach spaces~\cite{SI:76,RC-JAS-JV:14}. We present estimates for Lipschitz
constants of these iterative methods and demonstrate that, for diagonally
weighted $\ell_1$ and $\ell_\infty$ norms, these algorithms achieve
improved rates of convergence compared to their Euclidean counterparts. As a clear distinction from the classical theory, we prove that the forward step method is convergent for an operator which is (weakly) monotone with respect to an $\ell_1$ or $\ell_\infty$ norm, but that the method need not converge if the operator is monotone with respect to a Euclidean norm. This result is analogous to the result on weakly-contracting ODEs as in~\cite[Theorem~21]{SJ-PCV-FB:19q}.

Third, we study operator splitting methods. We prove that the standard
forward-backward, Peaceman-Rachford, and Douglas-Rachford splitting
algorithms all apply in our framework and that improved convergence
may be achieved for these non-Euclidean norms compared to their Euclidean
counterparts.

Fourth, as an application, we present methods to compute equilibria for
recurrent neural networks. We extend the recent work
of~\cite{SJ-AD-AVP-FB:21f,AD-AVP-FB:21k} to demonstrate that our
non-Euclidean monotone operator theory is readily applicable and can
provide accelerated convergence of iterations when viewing the problem of
computing an equilibrium as an appropriate operator splitting problem. We
highlight several iterations for the computation of the equilibrium and
discuss the trade-off between computation, allowable range of stepsizes, and
rate of convergence between the iterations. Finally, we present numerical
simulations presenting rates of convergence of the different iterations
when applied to this problem.

\begin{acc}
In the interest of brevity, many proofs of technical lemmas are omitted and
will appear in a forthcoming technical report.
\end{acc}
\begin{arxiv}
	Since this document is an arXiv technical report, it contains proofs of additional technical lemmas that are not presented in the conference version.
\end{arxiv}

\section{Preliminaries}
\subsection{Notations}
For differentiable $\map{\OF}{\real^n}{\real^n}$, we let $\jac{\OF}(x) := \frac{\partial \OF(x)}{\partial x} \in \real^{n \times n}$ denote 
its Jacobian evaluated at $x$. For an arbitrary mapping $\OF$, we let $\dom(\OF)$ be its domain. For $\map{\OF}{\real^n}{\real^n}$, 
we let $\zero(\OF) := \setdef{x \in \real^n}{\OF(x) = 0}$ and $\fixed(\OF) = \setdef{x \in \real^n}{\OF(x) = x}$ be the sets of zeros of $\OF$ 
and fixed points of $\OF$, respectively. We let $\map{\Id}{\real^n}{\real^n}$ be the identity map and $I_n \in \real^{n \times n}$ be the $n \times n$ 
identity matrix. 

\subsection{Norms and Logarithmic Norms}
Instrumental to the theory of non-Euclidean monotone operator theory are logarithmic norms (also referred to as matrix measures), henceforth called log norms, independently discovered by Dahlquist and Lozinskii in 1958~\cite{GD:58,SML:58}. 
\begin{definition}[Logarithmic norm]
	Let $\|\cdot\|$ be a norm on $\real^n$ and its corresponding induced norm on $\real^{n \times n}$. The \emph{logarithmic norm} of a matrix $A \in \real^{n \times n}$ is
	\begin{equation}\label{eq:matrixmeasure}
	\mu(A) := \lim_{h \to 0^+} \frac{\|I_n + hA\| - 1}{h}.
	\end{equation}
\end{definition}
It is well known that this limit is well posed because the right-hand side of \eqref{eq:matrixmeasure}
is non-increasing in $h$, due to the convexity of the norm. We refer to
\cite{CAD-HH:72} for properties enjoyed by log norms, which include subadditivity, positive homogeneity, convexity, and $\alpha(A) \leq \mu(A) \leq \|A\|$. 

We will be specifically interested in diagonally weighted $\ell_1$ and $\ell_\infty$
norms defined by
\begin{gather*}
\norm{x}{1,[\eta]} = \sum_i\eta_i|x_i|
\qquad\text{and}\qquad
\norm{x}{\infty,[\eta]^{-1}} = \max_{i}  \frac{1}{\eta_i}|x_i|,
\end{gather*}
where, given a positive vector $\eta\in\realpositive^n$, we use $[\eta]$ to denote the diagonal matrix with diagonal entries $\eta$.  For $A \in \real^{n \times n}$, the corresponding induced and log norms are
\begin{align*}
\norm{A}{\infty,[\eta]^{-1}} &= \max_{i\in\until{n}} \sum_{j=1}^n \frac{\eta_j}{\eta_i} |a_{ij}|,
\\
\mu_{\infty,[\eta]^{-1}}(A) &= \max_{i \in \until{n}} \Big(a_{ii} + \sum\nolimits_{j=1,j \neq i}^n |a_{ij}|\frac{\eta_j}{\eta_i}\Big), \\
\|A\|_{1,[\eta]} &= \|A^\top\|_{\infty,[\eta]^{-1}}, \quad \mu_{1,[\eta]}(A) = \mu_{\infty,[\eta]^{-1}}(A^\top).
\end{align*}
We note also that for the Euclidean norm $\|\cdot\|_2$, the corresponding log norm is $\mu_2(A) = \frac{1}{2}\lambda_{\max}(A + A^\top)$.
\begin{extra}
\sashatodo{This is an interesting definition that would allow us to generalize our results to some classes of continuous $\OF$ (including locally Lipschitz $\OF$). After thinking about it, I think I will focus only on continuously differentiable $\OF$ and add a remark that results extend to locally Lipschitz $\OF$.}
We introduce the following useful definition:
\begin{definition}[Log norm-bounded mean-value set]
	Let $\|\cdot\|$ be a norm with corresponding log norm $\mu(\cdot)$ and let $\map{\OF}{\real^n}{\real^n}$.
	Then for $b \in \real$, define the effective log norm-bounded domain of $\OF$ by
	\begin{equation*}
	\begin{aligned}
	\mcLD_{\|\cdot\|}^{\leq b}(\OF) := \{&x \in \real^n\;|\;\forall y \in \real^n, \exists A \in \real^{n \times n} \text{ s.t. } \\&\OF(x) - \OF(y) = A(x-y) \text{ and } \mu(A) \leq b\}.
	\end{aligned}
	\end{equation*}
	If there exists $b \in \real$ such that $\mcLD_{\|\cdot\|}^{\leq b}(\OF) = \real^n$, we say that $\OF$ satisfies the \emph{log norm mean-value property with constant $b$}.
	If $\OF$ satisfies the log norm mean-value property with constant $b$, then define the log norm mean-value set by
	\begin{equation}
	\begin{aligned}
	\mcMV_{\|\cdot\|}^{\leq b}(\OF) := \{&A \in \real^{n \times n} \;|\; \exists x,y \in \real^n \text{ s.t. } \\&\OF(x) - \OF(y) = A(x-y) \text{ and } \mu(A) \leq b\}.
	\end{aligned}
	\end{equation}
\end{definition}
Note that if $\OF(x) = Ax + b$, then $\mcLD_{\|\cdot\|}^{\leq b}(\OF) = \real^n$ if and only if $\mu(A) \leq b$. In this case, $\mcMV_{\|\cdot\|}^{\leq b}(\OF) = \{A\}$.
Suppose $\map{\OF}{\real^n}{\real^n}$ is continuously differentiable and satisfies $\mu(\jac{\OF}(x)) \leq b$ for all $x \in \real^n$. Then $\mcLD_{\|\cdot\|}^{\leq b}(\OF) = \real^n$ and $\setdef{\int_{0}^1 \jac{\OF}(y - \tau(x-y))d\tau}{x,y \in \real^n} \subseteq \mcMV_{\|\cdot\|}^{\leq b}(\OF)$. 
\end{extra}
\subsection{Contractions, nonexpansive maps, Banach-Picard and \KM iterations}
For the remainder of the paper, we assume all mappings are continuously differentiable unless otherwise stated.

\begin{definition}[Lipschitz continuity]
	Let $\|\cdot\|$ be a norm and $\map{\OF}{\real^n}{\real^n}$ be a map. $\OF$ is 
	\emph{Lipschitz continuous} with constant $\Lip(\OF) \in \realnonnegative$ if for all $x_1,x_2 \in \real^n$
	\begin{equation}
	\norm{\OF(x_1)-\OF(x_2)}{}\leq \Lip(\OF) \norm{x_1-x_2}{}.
	\end{equation}
\end{definition}


Equivalently, $\OF$ is Lipschitz continuous with
constant $\Lip(\OF)$ if and only if
\begin{equation}
\norm{\jac{\OF}(x)}{} \leq \Lip(\OF) \qquad \text{for all } x\in\real^n.
\end{equation}

\begin{definition}[One-sided Lipschitz functions~{\cite[Definition~26]{AD-SJ-FB:20o}}]
	Given a norm $\norm{\cdot}{}$ with corresponding log norm
	$\mu(\cdot)$, a map $\map{\OF}{\real^n}{\real^n}$ is one-sided Lipschitz
	with constant $\osL(\OF)\in\real$ if
	\begin{equation} \label{eq:osL=WSIP-appendix}
	\mu(\jac{\OF}(x)) \leq \osL(\OF)   \qquad \text{for all } x\in\real^n.
	\end{equation}
\end{definition}


Note that (i) the one-sided
Lipschitz constant is upper bounded by the Lipschitz constant, (ii) a
Lipschitz map is always one-sided Lipschitz, and (iii) the one-sided
Lipschitz constant may be negative.

\begin{definition}[Contractions and nonexpansive maps]
	Let $\map{\OT}{\real^n}{\real^n}$ be Lipschitz with respect to a norm $\|\cdot\|$. We say
	\begin{enumerate}
		\item $\OT$ is a \emph{contraction} if $\Lip(\OT) \in {[0,1[}$,
		\item $\OT$ is \emph{nonexpansive} if $\Lip(\OT) = 1$.
	\end{enumerate}
\end{definition}


\begin{definition}[Averaged maps]
	We say a nonexpansive map $\map{\OT}{\real^n}{\real^n}$ is \emph{averaged} provided that there exists a nonexpansive map $\map{\ON}{\real^n}{\real^n}$ such that for some $\theta \in {]0,1[}$,
	\begin{equation}
	\OT = (1 - \theta) \Id + \theta \ON.
	\end{equation}
	
\end{definition}

\begin{remark}
	When the norm is induced by an inner product, the composition of two averaged mappings yields another averaged mapping; see~\cite[Proposition~4.44]{HHB-PLC:17}. This, however, need not hold for non-Euclidean spaces. 
\end{remark}

\begin{definition}[\KM iterations~{\cite[Section~5.2]{HHB-PLC:17}}]
	Let $\map{\OT}{\real^n}{\real^n}$ be nonexpansive with respect to a norm $\|\cdot\|$. The \emph{\KM iterations} applied to $\OT$ defines the sequence $\{x_k\}_{k=0}^\infty$ by
	\begin{equation}\label{eq:KMiterations}
	x_{k+1} = (1-\theta)x_{k} + \theta \OT(x_{k}),
	\end{equation}
	where $\theta \in {]0,1[}$.
\end{definition}

\begin{lemma}[Convergence and asymptotic regularity of \KM iterations~\cite{RC-JAS-JV:14}] \label{lemma:KMconvergence}
	Let $\map{\OT}{\real^n}{\real^n}$ be nonexpansive with respect to a norm $\|\cdot\|$ and consider the \KM iterations as in~\eqref{eq:KMiterations}. Suppose $\fixed(\OT) \neq \emptyset$ and let $x^* \in \fixed(\OT)$. Then
	\begin{equation}
	\|x_k - \OT(x_k)\| \leq \frac{2\|x_0 - x^*\|}{\sqrt{k\pi\theta(1-\theta)}}.
	\end{equation}
	In particular, $\|x_k - \OT(x_k)\| \to 0$ as $k \to \infty$ with $\|x_k - \OT(x_k)\| \sim \bigO(1/\sqrt{k}).$ Moreover, the convergence rate is optimized with $\theta = 1/2$. 
\end{lemma}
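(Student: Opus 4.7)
The plan is to reduce the claim to a standard norm-only argument that works in any normed space, culminating in an application of the combinatorial bound of Cominetti–Soto–Vaisman~\cite{RC-JAS-JV:14}, since nowhere will we use an inner product structure.

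First I would introduce the relaxed operator $\OT_\theta := (1-\theta)\Id + \theta \OT$, so that the KM recursion becomes simply $x_{k+1} = \OT_\theta(x_k)$. Because $\Id$ is nonexpansive (with constant $1$) and $\OT$ is nonexpansive by hypothesis, the triangle inequality gives $\Lip(\OT_\theta) \le (1-\theta) + \theta = 1$, so $\OT_\theta$ is itself nonexpansive, and $\fixed(\OT_\theta) = \fixed(\OT)$. Hence, for any $x^* \in \fixed(\OT)$, the Fejér-type inequality $\|x_{k+1}-x^*\| = \|\OT_\theta(x_k)-\OT_\theta(x^*)\| \le \|x_k - x^*\|$ holds, and iterating yields the uniform bound $\|x_k - x^*\| \le \|x_0 - x^*\|$. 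A short rearrangement of the recursion gives $x_{k+1} - x_k = \theta(\OT(x_k) - x_k)$, so the target quantity is simply $\|x_k - \OT(x_k)\| = \theta^{-1}\|x_{k+1}-x_k\|$, and it suffices to bound the differences of consecutive iterates.

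Next I would establish monotonicity of $\|x_{k+1}-x_k\|$. By nonexpansiveness of $\OT_\theta$,
\[
\|x_{k+1}-x_k\| = \|\OT_\theta(x_k) - \OT_\theta(x_{k-1})\| \le \|x_k - x_{k-1}\|,
\]
so the sequence of increments is non-increasing. This already yields asymptotic regularity $\|x_{k+1}-x_k\| \to 0$ modulo the quantitative rate, by the standard ``averaging'' trick: $\|x_0 - x^*\| \ge \|x_k - x^*\|$ and a telescoping comparison with partial sums of the non-increasing sequence $\|x_{j+1}-x_j\|$ forces $k\|x_{k+1}-x_k\|$ to remain controlled.

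The hard part is the precise constant $\tfrac{2}{\sqrt{\pi k \theta(1-\theta)}}$. For this I would invoke the combinatorial identity of Cominetti–Soto–Vaisman: by expanding $x_k - x_{k+1}$ through the recursion $x_{k+1} = (1-\theta)x_k + \theta \OT(x_k)$ and comparing with the ``shifted'' sequence $\OT(x_j) - x_j$, one derives the estimate
\[
\|x_{k+1} - x_k\| \le \binom{k}{\lfloor k\theta\rfloor}\,\theta^{\lfloor k\theta\rfloor}(1-\theta)^{k-\lfloor k\theta\rfloor}\,\|x_0 - x^*\|,
\]
with analogous binomial weights at intermediate indices. The key observation is that this derivation uses only the triangle inequality and nonexpansiveness of $\OT$, not any inner-product geometry, so it carries over verbatim to any norm on $\real^n$. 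Applying Stirling's formula to the central binomial weight then converts the right-hand side into $\tfrac{2\|x_0-x^*\|}{\sqrt{\pi k \theta(1-\theta)}}$, and dividing by $\theta$ would give a spurious $\theta^{-1}$; the cleaner route is to track $\|x_k - \OT(x_k)\|$ directly through the identity $\OT(x_k) - x_k = \theta^{-1}(x_{k+1}-x_k)$ combined with the sharper bound obtained for $\|x_k - \OT(x_k)\|$ itself in~\cite{RC-JAS-JV:14}. The main obstacle is precisely this tightness of the constant; everything else is a transparent consequence of nonexpansiveness and convex combination, and the optimality of $\theta = 1/2$ follows by maximizing $\theta(1-\theta)$.
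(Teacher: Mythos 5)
The paper offers no proof of this lemma at all --- it is imported verbatim from Cominetti--Soto--Vaisman \cite{RC-JAS-JV:14} --- so your top-level strategy of reducing everything to that reference coincides with the paper's treatment. Your elementary reductions are also fine: $\OT_\theta:=(1-\theta)\Id+\theta\OT$ is nonexpansive by the triangle inequality, $\fixed(\OT_\theta)=\fixed(\OT)$, the increments $\|x_{k+1}-x_k\|=\theta\|x_k-\OT(x_k)\|$ are non-increasing, the iterates stay in the ball $\{x : \|x-x^*\|\le\|x_0-x^*\|\}$, and optimality of $\theta=1/2$ is just maximization of $\theta(1-\theta)$.

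Two of the internal steps you sketch would not survive being written out, however. First, the ``standard averaging trick'' you invoke for qualitative asymptotic regularity does not work in a general normed space: telescoping gives $\sum_{j<k}\|x_{j+1}-x_j\|\ge\|x_k-x_0\|$, a \emph{lower} bound on the partial sums of the non-increasing increment sequence, which controls nothing. The familiar argument that does deliver $k\theta(1-\theta)\|x_k-\OT(x_k)\|^2\le\|x_0-x^*\|^2$ rests on the identity $\|x_{k+1}-x^*\|^2\le\|x_k-x^*\|^2-\theta(1-\theta)\|x_k-\OT(x_k)\|^2$, which is inner-product geometry and is precisely what is unavailable here; asymptotic regularity in an arbitrary normed space is Ishikawa's theorem and already requires the kind of doubly-indexed recursive estimate that \cite{RC-JAS-JV:14} sharpens. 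Second, the displayed binomial bound on $\|x_{k+1}-x_k\|$ is not the form of the cited result (their theorem bounds the residual $\|x_k-\OT(x_k)\|$ by $\operatorname{diam}(C)/\sqrt{\pi\sum_j\theta_j(1-\theta_j)}$ for any bounded convex $C$ invariant under $\OT$, via a majorizing recursion tied to the concentration function of sums of Bernoulli variables), and, as you concede, dividing your bound by $\theta$ yields the wrong constant. The honest conclusion is that the entire quantitative content of the lemma --- including the constant, with $2\|x_0-x^*\|$ standing in for the diameter of the invariant ball --- lives in \cite{RC-JAS-JV:14} and is not recoverable from the triangle-inequality manipulations you describe; citing it for that step, as the paper does, is the right move, but the intermediate derivation you interpolate should be deleted rather than repaired.
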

\section{Non-Euclidean Monotone Operators}
\subsection{Definitions and Properties}

\begin{definition}[Non-Euclidean monotone operator]\label{def:monotoneoperator}
	A continuously differentiable operator $\map{\OF}{\real^n}{\real^n}$ is strongly monotone with monotonicity parameter $c > 0$ with respect to a norm $\|\cdot\|$ on $\real^n$ provided for all $x \in \real^n$,
	\begin{equation}\label{eq:monotone}
	-\mu(-\jac{\OF}(x)) \geq c.
	\end{equation}
	If the inequality holds with $c = 0$, we say $\OF$ is monotone (or weakly monotone) with respect to $\|\cdot\|$. 
\end{definition}
Note that this condition is equivalent to $-\osL(-\OF) \geq c$. Moreover, if $\OF$ is only locally Lipschitz, we ask that~\eqref{eq:monotone} holds almost everywhere.
\begin{remark}[Comparison to the Euclidean case]
	For an operator $\map{\OF}{\real^n}{\real^n}$, let $\|\cdot\|_2$ be the Euclidean norm with corresponding inner product $\inprod{\cdot}{\cdot}$. 
Then following~\cite[Definition~20.1]{HHB-PLC:17}, $\OF$ is monotone with respect to $\|\cdot\|_2$ if
	$$\inprod{\OF(x) - \OF(y)}{x-y} \geq 0, \qquad \text{for all } x,y \in \real^n.$$
	If $\OF$ is continuously differentiable, this condition is known to be equivalent to (e.g.,~\cite{EKR-SB:16}) $\jac{\OF}(x) + \jac{\OF}(x)^\top \succeq 0$, or equivalently $-\mu_2(-\jac{\OF}(x)) \geq 0$ or $\frac{1}{2}\lambda_{\min}(\jac{\OF}(x) + \jac{\OF}(x)^\top) \geq 0$, which coincides with Definition~\ref{def:monotoneoperator}.
\end{remark}

By subadditivity of $\mu$, a sum of operators which are monotone with respect to the same norm is also monotone. Additionally, if $\OF$ is (strongly) 
monotone with monotonicity parameter $c \geq 0$, then for any $\alpha \geq 0$, $\Id + \alpha \OF$ is strongly monotone with monotonicity parameter $1 + \alpha c$. 
\begin{remark}[Connection with contracting vector fields~\cite{WL-JJES:98}]\label{rmk:contractiontheory}
	A continuously differentiable mapping $\map{\OF}{\real^n}{\real^n}$ is strongly contracting with rate $c > 0$ with respect to a 
norm $\|\cdot\|$ on $\real^n$ provided for all $x,y \in \real^n$,
	\begin{equation}
	\mu(\jac{\OF}(x)) \leq -c.
	\end{equation}
	If this inequality holds with $c = 0$, we say $\OF$ is weakly contracting with respect to $\|\cdot\|$. 
	Clearly, $\OF$ is (strongly) monotone if and only if $-\OF$ is (strongly) contracting.
\end{remark} 

\begin{example}
	An affine mapping $\OF(x) = Ax + b$ is monotone if and only if $-\mu(-A) \geq 0$ and strongly monotone with parameter $c$ if and only 
if $-\mu(-A) \geq c$. This implies that the spectrum of $A$ lies in the portion of the complex plane given by $\setdef{z \in \complex}{\Re(z) \geq c}$. 
\end{example}

\begin{arxiv}
\begin{lemma}[Lipschitz constants of inverses of strongly monotone operators]\label{lemma:inverse}
	Suppose $\OF: \real^n \to \real^n$ is a strongly monotone operator with parameter $c > 0$. Then $\OF^{-1}$ is Lipschitz with 
constant $\ell = 1/c$.
\end{lemma}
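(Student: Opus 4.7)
The plan is to establish the lower bound $\|\OF(x_1) - \OF(x_2)\| \geq c\|x_1 - x_2\|$ for all $x_1,x_2 \in \real^n$, from which both injectivity of $\OF$ and the $1/c$-Lipschitz estimate for $\OF^{-1}$ follow immediately; a separate short argument then ensures $\OF^{-1}$ is defined on all of $\real^n$.

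The core computation goes through the mean-value representation
\[
\OF(x_1) - \OF(x_2) = M(x_1 - x_2), \qquad M := \int_0^1 \jac{\OF}\bigl(x_2 + t(x_1-x_2)\bigr)\, dt.
\]
Since $\mu(\cdot)$ is convex, the map $A \mapsto -\mu(-A)$ is concave, so Jensen's inequality together with the strong monotonicity hypothesis $-\mu(-\jac{\OF}(x)) \geq c$ gives $-\mu(-M) \geq c$, equivalently $\mu(-M) \leq -c$. I would then invoke the standard log-norm estimate from \cite{CAD-HH:72}: any $B \in \real^{n \times n}$ with $\mu(B) \leq -c < 0$ is Hurwitz and satisfies $\|e^{Bt}\| \leq e^{-ct}$, so the integral representation $-B^{-1} = \int_0^\infty e^{Bt}\, dt$ converges and yields $\|B^{-1}\| \leq 1/c$. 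Applying this to $B := -M$ produces $\|M^{-1}\| \leq 1/c$, and therefore
\[
\|x_1 - x_2\| = \|M^{-1}(\OF(x_1) - \OF(x_2))\| \leq \frac{1}{c}\|\OF(x_1) - \OF(x_2)\|.
\]

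To ensure $\OF^{-1}$ is globally defined, I would verify the hypotheses of Hadamard's global inverse function theorem. The bound $\mu(-\jac{\OF}(x)) \leq -c$ implies that the spectral abscissa of $-\jac{\OF}(x)$ is at most $-c$, so every eigenvalue of $\jac{\OF}(x)$ has real part $\geq c > 0$; hence $\jac{\OF}(x)$ is everywhere invertible. Properness follows immediately from $\|\OF(x)\| \geq c\|x\| - \|\OF(0)\|$. Thus $\OF$ is a $C^1$ diffeomorphism of $\real^n$, and the estimate above transfers directly to the Lipschitz bound $\Lip(\OF^{-1}) \leq 1/c$.

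The main analytical step is the implication $\mu(B) \leq -c \Rightarrow \|B^{-1}\| \leq 1/c$, for which the exponential integral representation is decisive; the rest is bookkeeping around Jensen's inequality and a routine invocation of Hadamard's theorem.
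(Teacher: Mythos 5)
Your proof is correct, and its backbone --- the integral mean-value representation $\OF(x_1)-\OF(x_2)=M(x_1-x_2)$ with $M=\int_0^1 \jac{\OF}(x_2+t(x_1-x_2))\,dt$, followed by Jensen's inequality for the convex function $\mu$ to obtain $-\mu(-M)\ge c$ --- is exactly the paper's argument. You diverge in two places. First, to convert $-\mu(-M)\ge c$ into a quantitative bound, the paper cites the product property of log norms, $\|Mz\|\ge -\mu(-M)\,\|z\|$ (Proposition~\ref{prop:productprop}), and reads off $\|\OF(x_1)-\OF(x_2)\|\ge c\|x_1-x_2\|$ directly; you instead prove the equivalent statement $\|M^{-1}\|\le 1/c$ from scratch via $\|e^{-Mt}\|\le e^{\mu(-M)t}\le e^{-ct}$ and $M^{-1}=\int_0^\infty e^{-Mt}\,dt$. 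This is self-contained and perfectly valid, but it reproves a fact the paper treats as a one-line citation. Second, you append a Hadamard global-inverse argument (everywhere-invertible Jacobian plus properness from $\|\OF(x)\|\ge c\|x\|-\|\OF(0)\|$) to show that $\OF$ maps onto $\real^n$; the paper's proof of this lemma establishes only injectivity and the Lipschitz estimate on the range of $\OF$, and handles surjectivity separately in its non-Euclidean Minty-Browder theorem. Your version is therefore slightly more complete as a standalone statement about $\dom(\OF^{-1})$, at the cost of invoking heavier machinery than the lemma strictly requires.
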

To prove Lemma~\ref{lemma:inverse}, we leverage the following useful property of log norms.
\begin{proposition}[Product property of log norms~\cite{CAD-HH:72}]\label{prop:productprop}
	Let $A \in \real^{n \times n}$ and $\|\cdot\|$ be a norm on $\real^n$ with corresponding log norm $\mu(\cdot)$. Then for all $x \in \real^n$,
	\begin{equation}
	\|Ax\| \geq \max\{-\mu(-A), -\mu(A)\} \|x\|.
	\end{equation}
\end{proposition}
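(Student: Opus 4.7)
My plan is to derive the inequality from the definition of the log norm by relating the induced norm $\|I + hA\|$ to its action on the specific vector $x$, and then using the reverse triangle inequality to flip an upper bound into a lower bound. The key observation is that although $\mu(A)$ controls the rate of growth of $\|I + hA\|$ as an operator norm, this directly controls the pointwise quantity $\|(I+hA)x\|$ via the trivial submultiplicative bound.

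Concretely, I would first fix $x \in \real^n$ and apply the operator-norm inequality to write $\|(I+hA)x\| \leq \|I+hA\|\,\|x\|$. Subtracting $\|x\|$ from both sides and dividing by $h > 0$, then passing to the limit as $h \to 0^+$, the definition~\eqref{eq:matrixmeasure} yields
\begin{equation*}
\limsup_{h \to 0^+}\frac{\|x + hAx\| - \|x\|}{h} \;\leq\; \mu(A)\,\|x\|.
\end{equation*}
Applying this same bound with $A$ replaced by $-A$ gives, for sufficiently small $h > 0$, the asymptotic estimate $\|x - hAx\| \leq \|x\|\bigl(1 + h\mu(-A)\bigr) + o(h)$.

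Second, I would invoke the reverse triangle inequality in the form $\|hAx\| \geq \|x\| - \|x - hAx\|$. Combining with the previous estimate produces $h\|Ax\| \geq -h\mu(-A)\|x\| - o(h)$. Dividing by $h > 0$ and letting $h \to 0^+$ eliminates the $o(h)$ remainder and gives $\|Ax\| \geq -\mu(-A)\|x\|$. Applying the identical argument with $A$ replaced by $-A$ (and using $\|(-A)x\| = \|Ax\|$) yields the companion bound $\|Ax\| \geq -\mu(A)\|x\|$. Taking the maximum of the two lower bounds completes the proof.

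I do not anticipate any serious obstacle here; the only subtlety is the careful bookkeeping of the $o(h)$ remainder when passing from the operator-norm definition to the pointwise inequality, and the sign juggling that turns an upper bound on $\|x+hAx\|$ into a lower bound on $\|Ax\|$ via the reverse triangle inequality. Both steps are standard once the substitution $A \mapsto -A$ is made.
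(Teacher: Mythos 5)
Your argument is correct: the paper gives no proof of this proposition (it is simply cited to \cite{CAD-HH:72}), and your derivation is the standard one --- bound $\|(I-hA)x\|$ by the operator norm, expand $\|I-hA\| = 1 + h\mu(-A) + o(h)$, apply the reverse triangle inequality to $hAx = x-(I-hA)x$, divide by $h$, and let $h\to 0^+$, then repeat with $A$ replaced by $-A$. One small simplification: since the difference quotient $(\|I+hB\|-1)/h$ is non-increasing in $h$ (as the paper notes), its limit as $h\to 0^+$ is its supremum over $h>0$, so $\|I+hB\|\leq 1+h\mu(B)$ holds exactly for every $h>0$ and the $o(h)$ bookkeeping can be dispensed with entirely.
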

\begin{proof}[Proof of Lemma~\ref{lemma:inverse}]
	Note that by the mean-value theorem and Proposition~\ref{prop:productprop},
	\begin{align*}
	\|\OF(x) - \OF(y)\| &= \left\|\int_{0}^1 \jac{\OF}(y + \tau(x-y))d\tau (x-y)\right\| \\
	&\geq -\mu\Big(-\int_{0}^1 \jac{\OF}(y + \tau(x-y))d\tau\Big)\|x - y\| \\
	&\geq \int_{0}^1 -\mu(-\jac{\OF}(y + \tau(x-y))d\tau)\|x - y\| \\
	& \geq c\|x- y\|.
	\end{align*}
	where second inequality is by subadditivity and continuity of $\mu$ and the final inequality is by the assumption of strong monotonicity. We can then immediately see that if $\OF(x) = \OF(y)$, then necessarily $x = y$, which implies 
that $\OF^{-1}$ is a mapping. Then write $x = \OF^{-1}(u), y = \OF^{-1}(v)$ (and therefore $\OF(x) = u, \OF(y) = v$). Then
	\begin{equation}
	\|u - v\| \geq c\|x - y\| = c\|\OF^{-1}(u) - \OF^{-1}(v)\|,
	\end{equation}
	which shows that $\OF^{-1}$ has Lipschitz constant $1/c$.
\end{proof}
\end{arxiv}

\begin{lemma}\label{lemma:fwdstepLip}
	Let $\map{\OF}{\real^n}{\real^n}$ be globally Lipschitz with respect to a diagonally-weighted $\ell_1$ or $\ell_\infty$ norm $\|\cdot\|$ with constant $\Lip(\OF) = \ell$.
	If $\OF$ is (possibly strongly) monotone with respect to $\|\cdot\|$ with monotonicity parameter $c \geq 0$, then 
		\begin{equation}
		\Lip(\Id - \alpha\OF) = 1 - \alpha c, \qquad \text{for all } \alpha \in {\Big]0, \frac{1}{\diagL(\OF)}\Big]},
		\end{equation}
		where $\diagL(\OF) := \sup_{x \in \real^n}\max_{i\in\until{n}} (\jac{\OF}(x))_{ii} \leq \ell$. 
\end{lemma}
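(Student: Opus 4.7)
My plan is to reduce the statement to an explicit computation of the induced matrix norm $\|I_n - \alpha \jac{\OF}(x)\|$ via the characterization $\Lip(\Id - \alpha \OF) = \sup_{x \in \real^n} \|I_n - \alpha \jac{\OF}(x)\|$ recalled in the preliminaries. Everything then hinges on an elementwise analysis of $I_n - \alpha \jac{\OF}(x)$ in a diagonally-weighted $\ell_\infty$ norm; the $\ell_1$ case will follow by transposition.

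Fixing $x$ and writing $A = \jac{\OF}(x)$, I would apply the explicit formula $\|B\|_{\infty,[\eta]^{-1}} = \max_i \sum_j \frac{\eta_j}{\eta_i}|b_{ij}|$ to $B = I_n - \alpha A$. The off-diagonal entries contribute $\alpha |a_{ij}|$ (since $\alpha > 0$), while the $i$-th diagonal entry of $B$ is $1 - \alpha a_{ii}$. The stepsize restriction $\alpha \in {]0, 1/\diagL(\OF)]}$ is exactly what guarantees $\alpha a_{ii} \leq 1$ for every $i$ and every $x$, so the absolute value on the diagonal can be dropped: $|1 - \alpha a_{ii}| = 1 - \alpha a_{ii}$. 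Collecting terms and comparing with the explicit formula for $\mu_{\infty,[\eta]^{-1}}(-A)$ from the preliminaries yields the key identity
\begin{equation*}
\|I_n - \alpha A\|_{\infty,[\eta]^{-1}} = 1 + \alpha\, \mu_{\infty,[\eta]^{-1}}(-A).
\end{equation*}
The conclusion for the $\ell_\infty$ case is then immediate: by monotonicity, $\mu_{\infty,[\eta]^{-1}}(-\jac{\OF}(x)) \leq -c$ for all $x$, and taking the supremum over $x$ gives $\Lip(\Id - \alpha \OF) \leq 1 - \alpha c$, with equality achieved when $c$ is the tight monotonicity constant $\inf_x (-\mu_{\infty,[\eta]^{-1}}(-\jac{\OF}(x)))$. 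The $\ell_1$ case reduces to the $\ell_\infty$ case applied to $A^\top$, using the identities $\|A\|_{1,[\eta]} = \|A^\top\|_{\infty,[\eta]^{-1}}$ and $\mu_{1,[\eta]}(A) = \mu_{\infty,[\eta]^{-1}}(A^\top)$; since transposition fixes the diagonal, $\diagL(\OF)$ remains the correct diagonal bound and the same argument applies verbatim. The subsidiary estimate $\diagL(\OF) \leq \ell$ is a one-liner from $a_{ii} \leq |a_{ii}| \leq \|A\|$ for either induced norm.

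The only genuine subtlety I anticipate is the sign-dropping step: $\diagL(\OF)$ must control precisely the signed maximum diagonal entry, not merely its absolute value, because it is the nonnegativity of $1 - \alpha a_{ii}$ (rather than a two-sided bound) that is needed to identify the induced norm with $1 + \alpha \mu(-A)$. This is why the admissible stepsize range features $\diagL(\OF)$ rather than $\Lip(\OF)$, and it is ultimately what permits strictly larger stepsizes than the Euclidean analog would allow.
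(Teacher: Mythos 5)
Your argument is correct, and it is worth noting that the paper does not actually supply a proof of this lemma at all: its entire proof is the citation ``The result follows from~\cite[Theorem~2]{SJ-AD-AVP-FB:21f}.'' What you have written is, in effect, the elementary computation that the citation hides. The chain of reasoning is sound: the reduction to $\sup_x \|I_n - \alpha \jac{\OF}(x)\|$ via the Jacobian characterization of Lipschitz constants, the entrywise evaluation of the weighted $\ell_\infty$ induced norm, the observation that $\alpha \leq 1/\diagL(\OF)$ is precisely what makes $1 - \alpha a_{ii} \geq 0$ so that the row sum collapses to $1 + \alpha\,\mu_{\infty,[\eta]^{-1}}(-A)$, and the transposition argument for the $\ell_1$ case (using that transposition fixes the diagonal, so $\diagL$ is unchanged) all check out against the explicit formulas for $\|\cdot\|_{\infty,[\eta]^{-1}}$ and $\mu_{\infty,[\eta]^{-1}}$ given in the preliminaries. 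You are also right to isolate the sign-dropping step as the crux; it is exactly why the admissible stepsize is governed by $\diagL(\OF)$ rather than $\ell$, which is the source of the improvement over the Euclidean range $]0, 2c/\ell^2[$ in Table~I. Two minor points you flag or should flag explicitly: the stated equality $\Lip(\Id - \alpha\OF) = 1 - \alpha c$ only holds when $c$ is the tight monotonicity constant $\inf_x(-\mu(-\jac{\OF}(x)))$ (otherwise one only gets $\leq$, which suffices for all downstream uses), and the interval $]0, 1/\diagL(\OF)]$ implicitly assumes $\diagL(\OF) > 0$ --- if $\diagL(\OF) \leq 0$ the identity holds for every $\alpha > 0$ since $1 - \alpha a_{ii} \geq 0$ automatically. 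Neither affects the substance.
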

\begin{arxiv}
\begin{proof}
	The result follows from~\cite[Theorem~2]{SJ-AD-AVP-FB:21f}. 
\end{proof}
\end{arxiv}

	Note that for Euclidean norms, if $\OF$ is monotone, but not strongly monotone, then $(\Id - \alpha\OF)$ need not be nonexpansive for any $\alpha>0$. Indeed, consider $\OF(x) = \left(\begin{smallmatrix} 0 & 1 \\ -1 & 0 \end{smallmatrix}\right)x$, which is monotone with respect to the $\ell_2$ norm, but $(\Id - \alpha\OF)$ is expansive for every $\alpha > 0$.

\subsection{Resolvent and reflected resolvent operators}
\begin{definition}[Resolvent and reflected resolvent]
	Let $\map{\OF}{\real^n}{\real^n}$ be a mapping and $\alpha > 0$. The resolvent of $\alpha\OF$ is defined as
	\begin{equation}
	\OJ_{\alpha\OF} = (\Id + \alpha\OF)^{-1}.
	\end{equation}
	The reflected resolvent, also called the Cayley operator of $\alpha\OF$ is 
	\begin{equation}
	\OR_{\alpha\OF} = 2 \OJ_{\alpha\OF} - \Id.
	\end{equation}
\end{definition}
Note that for any $\alpha > 0$, we have $\OF(x) = 0$ if and only if $x = \OJ_{\alpha\OF}(x) = \OR_{\alpha\OF}(x)$.

\begin{theorem}[A non-Euclidean Minty-Browder theorem]
	Suppose $\map{\OF}{\real^n}{\real^n}$ is monotone. Then for every $\alpha > 0$, $\dom(\OJ_{\alpha\OF}) = \dom(\OR_{\alpha\OF}) = \real^n$.
\end{theorem}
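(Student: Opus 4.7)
The plan is to show that $\Id + \alpha\OF$ is a bijection of $\real^n$ onto itself, which immediately gives $\dom(\OJ_{\alpha\OF}) = \real^n$, and hence $\dom(\OR_{\alpha\OF}) = \real^n$ by the definition $\OR_{\alpha\OF} = 2\OJ_{\alpha\OF} - \Id$.

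First, I would upgrade monotonicity of $\OF$ to strong monotonicity of $\Id + \alpha\OF$ with parameter $1$. Using subadditivity and positive homogeneity of the log norm, together with $\mu(-I_n) = -1$, one gets
\[ \mu(-(I_n + \alpha \jac{\OF}(x))) \leq \mu(-I_n) + \alpha \mu(-\jac{\OF}(x)) \leq -1, \]
so $-\mu(-(I_n + \alpha\jac{\OF}(x))) \geq 1$ for every $x \in \real^n$. Repeating the mean-value-theorem argument from the proof of Lemma~\ref{lemma:inverse}, applied to $\Id + \alpha\OF$ in place of $\OF$, then yields the global lower bound
\[ \|(\Id + \alpha\OF)(x) - (\Id + \alpha\OF)(y)\| \geq \|x - y\| \quad \text{for all } x, y \in \real^n, \]
from which injectivity of $\Id + \alpha\OF$ and a Lipschitz bound of $1$ on its inverse (restricted to the image) are immediate.

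It remains to prove that the image $S := (\Id + \alpha\OF)(\real^n)$ is all of $\real^n$. I would do this by a connectedness argument, showing that $S$ is nonempty, open, and closed. Nonemptiness is clear. For openness, the same log norm computation combined with Proposition~\ref{prop:productprop} gives $\|(I_n + \alpha\jac{\OF}(x))v\| \geq \|v\|$ for every $v \in \real^n$, so each Jacobian $I_n + \alpha\jac{\OF}(x)$ is invertible; the inverse function theorem then shows that $\Id + \alpha\OF$ is a local $C^1$-diffeomorphism and therefore an open map. For closedness, if $y_k = (\Id + \alpha\OF)(x_k) \to y$, the lower bound forces $\{x_k\}$ to be Cauchy in $\real^n$, hence convergent to some $x$ with $(\Id + \alpha\OF)(x) = y$ by continuity. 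Since $\real^n$ is connected and $S$ is nonempty, both open and closed, $S$ must coincide with $\real^n$.

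The main obstacle is the closedness step: local surjectivity from the inverse function theorem alone is insufficient, and classical Euclidean proofs of Minty-Browder typically rely on convex-analytic tools (subdifferential calculus, weak compactness) that are not directly available in the non-Euclidean setting. The replacement here is precisely the global lower bound above, which depends crucially on both subadditivity and the product property of the log norm; a continuation argument on $t \mapsto \Id + t\alpha\OF$, $t \in [0,1]$, would achieve the same end but the connectedness proof sketched here is more direct in finite dimensions.
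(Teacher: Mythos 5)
Your argument is correct, but it takes a genuinely different route from the paper. The paper reduces the problem to a fixed-point theorem from the cited reference \cite{SJ-AD-AVP-FB:21f}: for each $u\in\real^n$ it observes that $(\Id+\alpha\OF)(x)=u$ is equivalent to the fixed-point equation $x=-\alpha\OF(x)+u$, whose right-hand side has Jacobian with log norm $\mu(-\alpha\jac{\OF}(x))\le 0<1$, so existence and uniqueness of the solution follow from that one-sided-Lipschitz fixed-point theorem; surjectivity (and injectivity) of $\Id+\alpha\OF$ is then immediate. You instead give a self-contained global-inverse-function (Hadamard-type) argument: strong monotonicity of $\Id+\alpha\OF$ with parameter $1$ via subadditivity of $\mu$, the expansivity bound $\|(\Id+\alpha\OF)(x)-(\Id+\alpha\OF)(y)\|\ge\|x-y\|$ from the mean-value computation of Lemma~\ref{lemma:inverse}, openness of the image from invertibility of each Jacobian (Proposition~\ref{prop:productprop}) plus the inverse function theorem, closedness from the Cauchy-sequence argument, and connectedness of $\real^n$. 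Every step checks out, and all the ingredients you invoke are available in the paper. What each approach buys: the paper's proof is shorter and delegates the topological work to an external theorem, which keeps the conference exposition light but makes the result only as transparent as the citation; your proof is longer but fully elementary and self-contained, makes explicit where surjectivity actually comes from (the global lower bound, not merely local invertibility), and exposes the quantitative fact that $(\Id+\alpha\OF)^{-1}$ is $1$-Lipschitz, which the paper recovers separately in Lemma~\ref{lemma:contractionResolvent}.
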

\begin{arxiv}
\begin{proof}
	Note that 
	$$(\Id + \alpha \OF)(x) = 0 \quad \iff \quad -\alpha \OF(x) = x.$$
	However, since $\OF$ is continuously differentiable and $\OF$ is monotone, 
	$$-\mu(-\jac{\OF}(x)) \geq 0, \text{ for all } x \; \implies \; \mu(-\alpha \jac{\OF}(x)) < 1,$$
	for all $x \in \real^n$. Then by~\cite[Theorem~1]{SJ-AD-AVP-FB:21f}, $-\alpha \OF(x) = x$ has a unique solution, so $(\Id + \alpha \OF)(x) = 0$ has a unique solution. Moreover, for every $u \in \real^n$, the 
mapping $x \mapsto \alpha \OF(x) - u$ is continuously differentiable and monotone and thus has a unique fixed point, implying for every $u \in \real^n$, there exists an $x \in \real^n$ such that $(\Id + \alpha \OF)(x) = u$. This proves that $\dom(\OJ_{\alpha\OF}) = \real^n$. The proof for the reflected resolvent is a straightforward consequence of $\dom(\OJ_{\alpha\OF}) = \real^n$.
\end{proof}
\end{arxiv}

\begin{lemma}[Lipschitz constant of the resolvent operator]\label{lemma:contractionResolvent}
	Suppose $\OF: \real^n \to \real^n$ is (strongly) monotone with parameter $c
	\geq 0$. Then for every $\alpha > 0$, 
	\begin{equation}
	\Lip(\OJ_{\alpha\OF}) = \frac{1}{1 + \alpha c}.
	\end{equation}
\end{lemma}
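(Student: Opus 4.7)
The plan is to reduce to Lemma~\ref{lemma:inverse} by showing that the map $\OG := \Id + \alpha\OF$ is itself strongly monotone, with monotonicity parameter $1+\alpha c$. Since $\OJ_{\alpha\OF} = \OG^{-1}$, applying Lemma~\ref{lemma:inverse} to $\OG$ then yields the claimed Lipschitz constant $1/(1+\alpha c)$. Note that the hypothesis $\alpha>0$ together with $c\geq 0$ ensures that $1+\alpha c > 0$, so Lemma~\ref{lemma:inverse} is applicable even in the weakly monotone case.

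\noindent The key computation is to bound $-\mu(-\jac{\OG}(x))$ from below. First I would compute $\jac{\OG}(x) = I_n + \alpha \jac{\OF}(x)$, so that
\begin{equation*}
-\jac{\OG}(x) = -I_n - \alpha \jac{\OF}(x).
\end{equation*}
Then I would invoke the translation property of log norms, $\mu(B + tI_n) = \mu(B) + t$ for $t\in\real$, together with positive homogeneity, to obtain
\begin{equation*}
\mu(-\jac{\OG}(x)) = \mu(-\alpha\jac{\OF}(x)) - 1 = \alpha\,\mu(-\jac{\OF}(x)) - 1.
\end{equation*}
Multiplying by $-1$ and using the monotonicity hypothesis $-\mu(-\jac{\OF}(x))\geq c$ gives $-\mu(-\jac{\OG}(x)) = 1 + \alpha\bigl(-\mu(-\jac{\OF}(x))\bigr) \geq 1 + \alpha c$, which is the strong monotonicity of $\OG$ with parameter $1+\alpha c$.

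\noindent Having established strong monotonicity of $\OG$, the Minty--Browder theorem already stated in the paper guarantees that $\OG^{-1} = \OJ_{\alpha\OF}$ is everywhere defined, and Lemma~\ref{lemma:inverse} gives the upper bound $\Lip(\OJ_{\alpha\OF}) \leq 1/(1+\alpha c)$. For the reverse inequality (i.e., to conclude equality in the statement), I would exhibit tightness via an affine example: taking $\OF(x) = Ax + b$ with $-\mu(-A) = c$ makes $\OJ_{\alpha\OF}$ an affine map whose induced norm saturates the bound, since $\|(I_n+\alpha A)^{-1}\| \geq 1/(-\mu(-(I_n+\alpha A))) = 1/(1+\alpha c)$ by Proposition~\ref{prop:productprop} applied to an eigendirection realizing the log norm.

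\noindent The main obstacle is the translation/homogeneity manipulation of the log norm in the first step; it is conceptually routine but relies on properties of $\mu$ that must either be cited from~\cite{CAD-HH:72} or briefly justified from the definition~\eqref{eq:matrixmeasure}. Everything else reduces to bookkeeping around Lemma~\ref{lemma:inverse} and the already established non-Euclidean Minty--Browder theorem.
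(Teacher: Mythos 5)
Your proposal is correct and follows essentially the same route as the paper: the paper's proof likewise observes that $\Id + \alpha\OF$ is strongly monotone with parameter $1+\alpha c$ (a fact it records just after Definition~\ref{def:monotoneoperator}) and then invokes Lemma~\ref{lemma:inverse}. You simply spell out the log-norm translation and homogeneity computation and add a tightness remark that the paper leaves implicit.
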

\begin{arxiv}
\begin{proof}
	We observe that $\Id + \alpha\OF$ is
	strongly monotone with parameter $1 + \alpha c$. Then by
	Lemma~\ref{lemma:inverse}, the result holds.
\end{proof}
\end{arxiv}

\begin{lemma}[Reflected resolvent characterization~\cite{EKR-SB:16}]\label{lemma:Cayley}
	Suppose $\map{\OF}{\real^n}{\real^n}$ is monotone and $\alpha \geq 0$. Then
	$$\OR_{\alpha\OF} = (\Id - \alpha \OF)(\Id + \alpha \OF)^{-1}.$$
\end{lemma}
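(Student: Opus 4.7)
The plan is to verify the identity pointwise using only the definition $\OR_{\alpha\OF} = 2\OJ_{\alpha\OF} - \Id$ together with the fact, established via the non-Euclidean Minty--Browder theorem, that $\OJ_{\alpha\OF} = (\Id + \alpha\OF)^{-1}$ is a well-defined single-valued map on all of $\real^n$. Since both sides of the claimed identity are then maps $\real^n \to \real^n$, it suffices to check that they agree on an arbitrary point.

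Fix $x \in \real^n$ and set $y := \OJ_{\alpha\OF}(x)$. By definition of the resolvent, this is equivalent to $(\Id + \alpha\OF)(y) = x$, i.e., $x = y + \alpha\OF(y)$. Substituting into the definition of the reflected resolvent gives
\begin{equation*}
\OR_{\alpha\OF}(x) = 2\OJ_{\alpha\OF}(x) - x = 2y - \bigl(y + \alpha\OF(y)\bigr) = y - \alpha\OF(y) = (\Id - \alpha\OF)(y).
\end{equation*}
Since $y = (\Id + \alpha\OF)^{-1}(x)$, the right-hand side equals $(\Id - \alpha\OF) \circ (\Id + \alpha\OF)^{-1}(x)$, which establishes the identity.

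There is essentially no obstacle in the computation itself; it is a direct algebraic manipulation. The only substantive point is ensuring that $(\Id + \alpha\OF)^{-1}$ is defined on all of $\real^n$ so that the composition on the right-hand side makes sense as a map with the same domain as $\OR_{\alpha\OF}$. This is precisely the content of the preceding non-Euclidean Minty--Browder theorem for monotone $\OF$ and $\alpha > 0$, which the proof should briefly cite.
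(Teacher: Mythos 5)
Your proof is correct: the paper itself gives no proof of Lemma~\ref{lemma:Cayley} (it simply cites~\cite{EKR-SB:16}), and your pointwise computation --- setting $y = \OJ_{\alpha\OF}(x)$ so that $x = y + \alpha\OF(y)$ and hence $2y - x = (\Id - \alpha\OF)(y)$ --- is exactly the standard argument, with the domain issue correctly discharged by the non-Euclidean Minty--Browder theorem for $\alpha > 0$ (the case $\alpha = 0$ being the trivial identity $\Id = \Id$). Nothing is missing.
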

\begin{theorem}[Lipschitz constant of the Cayley operator] \label{thm:CayleyLip}
	Suppose $\OF: \real^n \to \real^n$ is globally Lipschitz with constant $\ell$ with respect to a diagonally weighted $\ell_1$ or $\ell_\infty$ norm. Moreover, suppose $\OF$ is (strongly) monotone with respect to $\|\cdot\|$ with monotonicity parameter $c
		\geq 0$. Then for $\alpha \in {]0,
			\frac{1}{\diagL(\OF)}[}$, 
		\begin{equation}
		\Lip(\OR_{\alpha\OF}) = \frac{1 - \alpha c}{1 + \alpha c} \leq 1.
		\end{equation}
\end{theorem}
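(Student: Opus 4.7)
The plan is to leverage the factorization $\OR_{\alpha\OF} = (\Id - \alpha\OF)\circ \OJ_{\alpha\OF}$ provided by Lemma~\ref{lemma:Cayley}, and then to control each factor separately using the Lipschitz estimates already established in Lemmas~\ref{lemma:fwdstepLip} and~\ref{lemma:contractionResolvent}. The monotonicity of $\OF$ makes the Cayley decomposition available, and by the non-Euclidean Minty--Browder theorem both factors are globally defined maps on $\real^n$.

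Next, I would invoke the submultiplicativity of Lipschitz constants under composition: for any two maps $\OT_1,\OT_2\colon\real^n \to \real^n$ that are Lipschitz with respect to the same norm, $\Lip(\OT_1\circ \OT_2) \leq \Lip(\OT_1)\cdot\Lip(\OT_2)$. Applied to the Cayley factorization, this gives
$$\Lip(\OR_{\alpha\OF}) \leq \Lip(\Id - \alpha\OF)\cdot \Lip(\OJ_{\alpha\OF}).$$
For $\alpha \in {]0, 1/\diagL(\OF)[}$, the global Lipschitzness of $\OF$ and monotonicity with parameter $c\ge 0$ let me apply Lemma~\ref{lemma:fwdstepLip} to obtain $\Lip(\Id - \alpha\OF) = 1 - \alpha c$ (this is where the diagonally weighted $\ell_1$ or $\ell_\infty$ hypothesis is essential), and Lemma~\ref{lemma:contractionResolvent} to obtain $\Lip(\OJ_{\alpha\OF}) = 1/(1+\alpha c)$. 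Multiplying yields $\Lip(\OR_{\alpha\OF}) \leq (1-\alpha c)/(1+\alpha c)$, which is at most one because $\alpha c \geq 0$.

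To upgrade the resulting inequality to an equality, I would exhibit a tight example. A natural route is the affine test case $\OF(x) = Ax + b$ with diagonal $A$ satisfying $-\mu(-A) = c$: both $\OJ_{\alpha\OF}$ and $\Id - \alpha\OF$ are then affine, so the Cayley operator is affine as well, and its Lipschitz constant equals the induced norm of $(I_n - \alpha A)(I_n + \alpha A)^{-1}$, which I would compute directly against the diagonally weighted structure to verify that the upper bound is achieved.

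The main obstacle I anticipate is precisely this equality claim. In non-Euclidean settings the submultiplicativity bound $\Lip(\OT_1\circ\OT_2)\leq \Lip(\OT_1)\cdot\Lip(\OT_2)$ is generally not tight, because the extremal direction realizing $\Lip(\Id - \alpha\OF)$ need not lie in the image of $\OJ_{\alpha\OF}$. Beyond the affine construction sketched above, a fully general argument may require exploiting the explicit row/column structure of the weighted $\ell_1$ and $\ell_\infty$ induced norms to propagate an extremal test vector through the composition while preserving the extremality at each stage.
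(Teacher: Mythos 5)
Your proof is correct and takes essentially the same route as the paper's: the paper likewise factors $\OR_{\alpha\OF}=(\Id-\alpha\OF)\circ\OJ_{\alpha\OF}$ via Lemma~\ref{lemma:Cayley} and multiplies the constants $1-\alpha c$ and $1/(1+\alpha c)$ supplied by Lemmas~\ref{lemma:fwdstepLip} and~\ref{lemma:contractionResolvent}. The tightness obstacle you flag is a fair observation but moot for this statement: under the paper's convention $\Lip(\cdot)$ denotes a (not necessarily minimal) Lipschitz constant, and the paper's own proof likewise establishes only the submultiplicative upper bound, so your sketched affine extremal construction is not required.
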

\begin{proof}
	By Lemma~\ref{lemma:fwdstepLip},
	$\Lip(\Id - \alpha \OF) = 1 - \alpha c$ for $\alpha \in {]0, \frac{1}{\diagL(\OF)}]}$. Therefore, the result follows from Lemmas~\ref{lemma:contractionResolvent} and~\ref{lemma:Cayley} since the Lipschitz constant 
of a composition of Lipschitz maps is the product of the Lipschitz constants.
\end{proof}

\begin{arxiv}
\begin{lemma}[Averagedness of resolvent]\label{lemma:averagedResolvent}
	Suppose $\OF$ is Lipschitz and monotone with respect to a diagonally weighted $\ell_1$ or $\ell_\infty$ norm. Then for every $\alpha > 0$, $\OJ_{\alpha\OF}$ is averaged.
\end{lemma}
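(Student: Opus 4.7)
The plan is to construct, for each $\alpha > 0$, an explicit averaged decomposition $\OJ_{\alpha\OF} = (1-\theta)\Id + \theta\ON$ with $\ON$ nonexpansive. Rearranging this defining identity forces $\ON = \theta^{-1}(\OJ_{\alpha\OF} - (1-\theta)\Id)$, so all I would have to do is pick $\theta \in {]0,1[}$ that makes this particular $\ON$ nonexpansive.

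The first step is to use the resolvent identity to rewrite $\ON$ as a composition. Letting $y := \OJ_{\alpha\OF}(x)$, one has $x = y + \alpha \OF(y)$ by the very definition of the resolvent. Substituting this into the expression for $\ON(x)$ gives $\ON(x) = \theta^{-1}\bigl[y - (1-\theta)(y + \alpha\OF(y))\bigr] = y - \tfrac{(1-\theta)\alpha}{\theta}\OF(y)$. Setting $\beta := \tfrac{(1-\theta)\alpha}{\theta}$, this reads $\ON = (\Id - \beta\OF)\circ \OJ_{\alpha\OF}$, exhibiting $\ON$ as a composition of a forward step and the resolvent itself.

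The second step is to bound $\Lip(\ON)$ by submultiplicativity. Lemma~\ref{lemma:contractionResolvent} already gives $\Lip(\OJ_{\alpha\OF}) = 1/(1+\alpha c) \leq 1$, and Lemma~\ref{lemma:fwdstepLip} gives $\Lip(\Id - \beta\OF) = 1 - \beta c \leq 1$ provided $\beta \in {]0, 1/\diagL(\OF)]}$; composing the two bounds yields $\Lip(\ON)\leq 1$. The third step is to check that, for every $\alpha > 0$, one can indeed choose $\theta \in {]0,1[}$ making $\beta \leq 1/\diagL(\OF)$. Assuming $\diagL(\OF) > 0$, this inequality rearranges to $\theta \geq \tfrac{\alpha\diagL(\OF)}{1+\alpha\diagL(\OF)}$; the right-hand side is strictly less than $1$, so any $\theta$ in that interval works, completing the construction.

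The only subtlety is the degenerate case $\diagL(\OF) \leq 0$. I would resolve it by observing that for a diagonally weighted $\ell_1$ or $\ell_\infty$ norm, monotonicity of $\OF$ unfolds to $(\jac\OF(x))_{ii} \geq \sum_{j\neq i}(\text{weight})\,|(\jac\OF(x))_{ij}| \geq 0$ for every $x$ and $i$, so $\diagL(\OF) \geq 0$ automatically. If equality holds, then $\jac\OF \equiv 0$, so $\OF$ is constant and $\OJ_{\alpha\OF}$ is a mere translation, for which averagedness is trivial. The main potential obstacle is keeping track of whether the composition $(\Id - \beta\OF) \circ \OJ_{\alpha\OF}$ inherits nonexpansiveness in the non-Euclidean setting, but since both factors are honest Lipschitz maps with global Lipschitz constants at most $1$, the standard composition argument carries over verbatim.
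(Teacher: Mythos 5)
Your argument is the paper's own proof in all essentials: the paper introduces the auxiliary operator $\OC_\OF^\theta = \tfrac{1}{\theta}\OJ_{\alpha\OF} - \tfrac{1-\theta}{\theta}\Id$, factors it as $\bigl(\Id - \tfrac{(1-\theta)\alpha}{\theta}\OF\bigr)\OJ_{\alpha\OF}$, and concludes via Lemma~\ref{lemma:fwdstepLip} and nonexpansiveness of $\OJ_{\alpha\OF}$ --- exactly your first two steps. If anything your write-up is slightly more careful: you dispose of the degenerate case $\diagL(\OF)\leq 0$ explicitly, and your constraint $\theta \geq \alpha\diagL(\OF)/(1+\alpha\diagL(\OF))$ is the correctly oriented form of the condition, whereas the paper's stated choice $\theta \leq 1/(1+\alpha\diagL(\OF))$ inherits a flipped fraction from its intermediate step-size range (both intervals are nonempty subsets of ${]0,1[}$, so the lemma's conclusion is unaffected).
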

\begin{proof}
	Consider the auxiliary operator $$\OC_\OF^\theta := \frac{\OJ_{\alpha\OF}}{\theta} - \frac{1 - \theta}{\theta} \Id,$$
	for $\theta \in {]0,1[}$. Note that the reflected resolvent corresponds to $\theta = \frac{1}{2}$. Then it is straightforward to compute
	\begin{align*}
	\OC_\OF^\theta &= \frac{(\Id + \alpha \OF)^{-1}}{\theta} - \frac{1 - \theta}{\theta}(\Id + \alpha \OF)(\Id + \alpha \OF)^{-1} \\ 
	&= \Big(\frac{\Id}{\theta} - \frac{1 - \theta}{\theta}(\Id + \alpha\OF)\Big)(\Id + \alpha \OF)^{-1} = \Big(\Id - \frac{(1 - \theta)\alpha}{\theta}\OF\Big)\OJ_{\alpha\OF}.
	\end{align*}
	Since $\OF$ is monotone, $\OJ_{\alpha\OF}$ is nonexpansive, and by Lemma~\ref{lemma:fwdstepLip}, 
	$$\Lip\Big(\Id - \frac{(1-\theta)\alpha}{\theta}\OF\Big) = 1, \qquad \text{for all } \alpha \in {\Big]0, \frac{1 - \theta}{\theta\diagL(\OF)}\Big]},$$
	which implies that $\OC_\OF^\theta$ is nonexpansive for all $\alpha$ in this range.
	
	Let $\alpha > 0$ be arbitrary. Then for any 
	$$\theta \leq \frac{1}{1 + \alpha\diagL(\OF)} \in {]0,1[},$$
	we have that 
	$$\OJ_{\alpha\OF} = (1 - \theta)\Id + \theta \OC_\OF^\theta$$
	and $\OC_\OF^\theta$ is nonexpansive. This proves that $\OJ_{\alpha\OF}$ is averaged.
\end{proof}
\end{arxiv}
\begin{example}
	Consider the linear operator
	$$\OF(x) = Ax = \begin{pmatrix} 2 & -2 \\ 1 & 1 \end{pmatrix}x.$$
	Then clearly $\OF$ is monotone with respect to the $\ell_\infty$ norm since
	$\ds-\mu_\infty(-A) = -\mu_\infty\left(\begin{smallmatrix} -2 & 2 \\ -1 & -1 \end{smallmatrix}\right) = 0.$
	Then for $\alpha = 1$, we compute
	$$\OJ_{\alpha\OF}(x) = \begin{pmatrix} 1/4 & 1/4 \\ -1/8 & 3/8 \end{pmatrix} x, \quad \OR_{\alpha\OF}(x) = \begin{pmatrix} -1/2 & 1/2 \\ -1/4 & -1/4 \end{pmatrix} x.$$
	Thus, $\Lip(\OJ_{\alpha\OF}) = 1/2$ and $\Lip(\OR_{\alpha\OF}) = 1$. In other words, for $\alpha = 1$, $\OJ_{\alpha\OF}$ is a contraction and $\OR_{\alpha\OF}$ is nonexpansive. \\
	For $\alpha = 2$, we compute
	$$\OJ_{\alpha\OF}(x) = \begin{pmatrix} \frac{3}{23} & \frac{4}{23} \\ -\frac{2}{23} & \frac{5}{23} \end{pmatrix} x, \quad \OR_{\alpha\OF}(x) = \begin{pmatrix} -\frac{17}{23} & \frac{8}{23} \\ -\frac{4}{23} & -\frac{13}{23} \end{pmatrix} x.$$
	Thus, $\Lip(\OJ_{\alpha\OF}) = 7/23$ and $\Lip(\OR_{\alpha\OF}) = 25/23$. In other words, for $\alpha = 2$, $\OJ_{\alpha\OF}$ is a contraction and $\OR_{\alpha\OF}$ is expansive.
\end{example}

\section{Finding Zeros of Non-Euclidean Monotone Operators}

\begin{table*}[]
	\centering
	\resizebox{.75\textwidth}{!}{	\begin{tabular}{|c|c|c|c|c|}
			\hline
			\multirow{3}{*}{Algorithm} & \multicolumn{4}{c|}{$\OF$ strongly monotone and globally Lipschitz} \\ \cline{2-5}
			& \multicolumn{2}{c|}{$\ell_2$} & \multicolumn{2}{c|}{Diagonally weighted $\ell_1$ or $\ell_\infty$} \\ \cline{2-5}
			& $\alpha$ range & Optimal $\Lip$ & $\alpha$ range & Optimal $\Lip$ \\ \hline 
			& & & & \\[-0.3em] 
			Forward step & $\ds{\Big]0, \frac{2c}{\ell^2}\Big[}$ & $\ds1-\frac{1}{2\kappa^2} + \bigO\Big(\frac{1}{\kappa^3}\Big)$ & $\ds{\Big]0, \frac{1}{\diagL(\OF)}\Big]}$ & $\ds1-\frac{1}{\kappa_\infty}$ \\ 
			& & & & \\[-0.3em]
			Proximal point & ${]0, \infty[}$ & N/A & ${]0, \infty[}$ & N/A \\ 
			& & & & \\[-0.3em]
			Cayley method & ${]0, \infty[}$ & $\ds1-\frac{1}{2\kappa}+\bigO\Big(\frac{1}{\kappa^2}\Big)$ & $\ds{\Big]0, \frac{1}{\diagL(\OF)}\Big]}$ & $\ds1-\frac{2}{\kappa_\infty}+ \bigO\Big(\frac{1}{\kappa_\infty^2}\Big)$ \\ \hline
	\end{tabular}}
	\caption{Step size ranges and Lipschitz constants for algorithms for finding zeros of monotone operators. For $\OF$ strongly monotone, let $c$ be its monotonicity parameter (with respect to the appropriate norm), $\ell$ its appropriate Lipschitz constant, and $\diagL(\OF) := \sup_{x \in \real^n}\max_{i\in\until{n}} (\jac{\OF}(x))_{ii} \leq \ell$. Additionally, $\kappa := \ell/c \geq 1$ and $\kappa_\infty := \diagL(\OF)/c \in [1,\kappa]$. Ranges of $\alpha$ and optimal Lipschitz constants for the Euclidean case are provided in~\cite{EKR-SB:16}. We do not assume that the strongly monotone $\OF$ is the gradient of a strongly convex function.}\label{table:comparison}
\end{table*}

Consider the problem of finding an $x \in \real^n$ that satisfies
\begin{equation}
\OF(x) = 0,
\end{equation}
where $\OF$ is monotone. This problem shows up in the computation of equilibrium points of contracting vector fields as noted in Remark~\ref{rmk:contractiontheory}. We present several well-known algorithms for finding zeros of monotone operators (see, e.g.,~\cite{EKR-SB:16}) and show how the non-Euclidean monotone operator framework allows the same algorithms to compute zeros of non-Euclidean monotone operators.

\begin{algo}[Forward step method]
	The \emph{forward step method} corresponds to the fixed point iteration
	\begin{equation}\label{eq:forwardstep}
	x_{k+1} = (\Id - \alpha\OF)(x_k).
	\end{equation}
\end{algo}

\begin{theorem}[Convergence guarantees for the forward step method]\label{thm:fwdstep}
	Suppose $\map{\OF}{\real^n}{\real^n}$ is globally Lipschitz with constant $\ell$ with respect to a diagonally-weighted $\ell_1$ or $\ell_\infty$ norm $\|\cdot\|$ and
	\begin{enumerate}
		\item\label{fwdstep1inf} $\OF$ is strongly monotone with respect to $\|\cdot\|$ with monotonicity parameter $c > 0$. Then the iteration~\eqref{eq:forwardstep} converges to the unique zero, $x^*$, of $\OF$ for every $\alpha \in {]0,\frac{1}{\diagL(\OF)}]}$. Moreover, for every $k \in \mathbb{Z}_{\geq 0}$, the iteration satisfies
		$$\|x_{k+1} - x^*\| \leq (1 - \alpha c)\|x_{k}-x^*\|,$$
		with the convergence rate being optimized at $\alpha = 1/\diagL(\OF)$. 
		\item\label{fwdstepmonotone} $\OF$ is monotone with respect to $\|\cdot\|$. Then if $\zero(\OF) \neq \emptyset$, the iteration~\eqref{eq:forwardstep} converges to an element of $\zero(\OF)$ for every $\alpha \in {]0,\frac{1}{\diagL(\OF)}[}.$ 
	\end{enumerate}
\end{theorem}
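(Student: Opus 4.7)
The plan is to leverage Lemma~\ref{lemma:fwdstepLip} to control the Lipschitz constant of the forward step map $\OT_\alpha := \Id - \alpha\OF$, and then to invoke either Banach's fixed point theorem for part~(\ref{fwdstep1inf}) or Lemma~\ref{lemma:KMconvergence} for part~(\ref{fwdstepmonotone}). Throughout, the key observation is that $x \in \fixed(\OT_\alpha)$ if and only if $x \in \zero(\OF)$, so convergence of the iterates to a fixed point of $\OT_\alpha$ is exactly the desired conclusion.

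For part~(\ref{fwdstep1inf}), Lemma~\ref{lemma:fwdstepLip} with $c > 0$ and $\alpha \in {]0, 1/\diagL(\OF)]}$ gives $\Lip(\OT_\alpha) = 1 - \alpha c \in {[0, 1[}$, so $\OT_\alpha$ is a contraction in $\|\cdot\|$. Banach's fixed point theorem then produces a unique $x^* \in \fixed(\OT_\alpha) = \zero(\OF)$, and the stated estimate $\|x_{k+1} - x^*\| = \|\OT_\alpha(x_k) - \OT_\alpha(x^*)\| \leq (1 - \alpha c)\|x_k - x^*\|$ follows immediately from the Lipschitz bound. Since $1 - \alpha c$ is strictly decreasing in $\alpha$, the convergence factor is minimized at the endpoint $\alpha = 1/\diagL(\OF)$, matching the rate $1 - 1/\kappa_\infty$ in Table~\ref{table:comparison}.

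For part~(\ref{fwdstepmonotone}), the main step is to recast the forward step as a \KM iteration. For $\alpha \in {]0, 1/\diagL(\OF)[}$, set $\theta := \alpha\, \diagL(\OF) \in {]0, 1[}$ and $\tilde{\OT} := \Id - (1/\diagL(\OF))\OF$. Lemma~\ref{lemma:fwdstepLip} applied with $c = 0$ shows that $\tilde{\OT}$ is nonexpansive, and a direct computation gives $\OT_\alpha = (1-\theta)\Id + \theta \tilde{\OT}$, so the iteration~\eqref{eq:forwardstep} coincides with the \KM iteration~\eqref{eq:KMiterations} applied to $\tilde{\OT}$ with parameter $\theta$. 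Since $\fixed(\tilde{\OT}) = \zero(\OF) \neq \emptyset$, Lemma~\ref{lemma:KMconvergence} supplies the asymptotic regularity $\|x_k - \tilde{\OT}(x_k)\| \to 0$.

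To promote asymptotic regularity to convergence of the iterates themselves, I would appeal to a Fejer-monotonicity argument valid in any norm: for each $x^* \in \zero(\OF)$, nonexpansiveness of $\tilde{\OT}$ together with the triangle inequality gives $\|x_{k+1} - x^*\| \leq \|x_k - x^*\|$. Hence $\{x_k\}$ is bounded and, by finite dimensionality, admits a convergent subsequence $x_{k_j} \to \bar{x}$; asymptotic regularity and continuity of $\tilde{\OT}$ identify $\bar{x} \in \fixed(\tilde{\OT}) = \zero(\OF)$; and then the non-increasing sequence $\|x_k - \bar{x}\|$ has a vanishing subsequence, so the full sequence converges to $\bar{x}$. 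The main obstacle is precisely this last step: unlike in Hilbert spaces, Fejer monotonicity for averaged operators in non-Euclidean norms does not directly yield convergence via inner-product machinery, and it is the finite-dimensional compactness argument that circumvents the issue. This failure mode also explains why the endpoint $\alpha = 1/\diagL(\OF)$ must be excluded in part~(\ref{fwdstepmonotone}): there $\theta = 1$, the iteration degenerates to Banach--Picard on a nonexpansive map, and convergence can genuinely fail.
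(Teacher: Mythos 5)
Your proposal is correct and follows essentially the same route as the paper: part~\ref{fwdstep1inf} via Lemma~\ref{lemma:fwdstepLip} and the Banach contraction principle, and part~\ref{fwdstepmonotone} by rewriting $\Id-\alpha\OF$ as an averaged (\KM) iteration of the nonexpansive map $\Id-\tilde{\alpha}\OF$ and invoking Lemma~\ref{lemma:KMconvergence}. The one point where you go beyond the paper's terse argument is in explicitly supplying the Fej\'er-monotonicity-plus-compactness step that upgrades the asymptotic regularity $\|x_k-\tilde{\OT}(x_k)\|\to 0$ of Lemma~\ref{lemma:KMconvergence} to convergence of the iterates in finite dimensions --- a detail the paper leaves implicit --- and that step is carried out correctly.
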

\begin{arxiv}
\begin{proof}
	Statement~\ref{fwdstep1inf} follows from Lemma~\ref{lemma:fwdstepLip}. Regarding Statement~\ref{fwdstepmonotone}, since $\OF$ is monotone with respect to a diagonally weighted $\ell_1$ or $\ell_\infty$ norm, $(\Id - \alpha \OF)$ is nonexpansive for $\alpha \in {]0,\frac{1}{\diagL(\OF)}]}$ by Lemma~\ref{lemma:fwdstepLip}. Moreover, for every $\alpha \in {]0,\frac{1}{\diagL(\OF)}[}$, there exists $\theta \in {]0,1[}$ such that
	$$\Id - \alpha\OF = (1-\theta) \Id + \theta(\Id - \tilde{\alpha}\OF),$$
	for some $\tilde{\alpha} \in {]0,\frac{1}{\diagL(\OF)}]}$. Therefore $\Id - \alpha \OF$ is averaged and by Lemma~\ref{lemma:KMconvergence}, if $\zero(\OF) \neq \emptyset$, the forward step method converges to an element of $\zero(\OF)$.
\end{proof}
\end{arxiv}
\begin{algo}[Proximal point method]\label{alg:proximalpt}
	The \emph{proximal point method} corresponds to the fixed point iteration
	\begin{equation}\label{eq:resolventIteration}
	x_{k+1} = \OJ_{\alpha\OF}(x_k) = (\Id + \alpha\OF)^{-1}(x_k).
	\end{equation} 
\end{algo}

\begin{theorem}[Convergence guarantees for the proximal point method]\label{thm:proximalconvergence}
	Suppose $\map{\OF}{\real^n}{\real^n}$ is
	\begin{enumerate}
		\item\label{proximalStrong} strongly monotone with respect to a norm $\|\cdot\|$ with monotonicity parameter $c > 0$. Then the iteration~\eqref{eq:resolventIteration} converges to the unique zero, $x^*$, of $\OF$ for every $\alpha \in {]0, \infty[}$. Moreover, for every $k \in \mathbb{Z}_{\geq 0}$, the iteration satisfies
		$$\|x_{k+1} - x^*\| \leq \frac{1}{1+\alpha c}\|x_{k}-x^*\|.$$
		\item\label{proximalLipschitz} monotone and globally Lipschitz with respect to a diagonally weighted $\ell_1$ or $\ell_\infty$ norm. Then if $\zero(\OF) \neq \emptyset$, the iteration~\eqref{eq:resolventIteration} converges to an element of $\zero(\OF)$ for every $\alpha \in {]0,\infty[}$.
	\end{enumerate}
\end{theorem}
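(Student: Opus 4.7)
The plan is to handle the two statements separately, reducing each to tools established earlier in the paper.

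For Statement (i), the key observation is that Lemma~\ref{lemma:contractionResolvent} already implies $\OJ_{\alpha\OF}$ is globally Lipschitz with constant $\frac{1}{1+\alpha c} < 1$ for every $\alpha>0$, so $\OJ_{\alpha\OF}$ is a contraction in the norm $\|\cdot\|$. The Banach fixed-point theorem then produces a unique fixed point $x^*$ of $\OJ_{\alpha\OF}$, which by the identity $x = \OJ_{\alpha\OF}(x) \iff \OF(x)=0$ is the unique zero of $\OF$, with the linear convergence bound following directly from $\Lip(\OJ_{\alpha\OF})=\frac{1}{1+\alpha c}$.

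For Statement (ii), since strong monotonicity is no longer available, the resolvent is only nonexpansive; convergence has to be obtained from an averagedness/\KM argument. First, I would invoke Lemma~\ref{lemma:averagedResolvent} to write $\OJ_{\alpha\OF} = (1-\theta)\Id + \theta \ON$ for some nonexpansive $\ON$ and $\theta\in{]0,1[}$. The iteration~\eqref{eq:resolventIteration} is then a \KM iteration for $\ON$, and $\fixed(\ON)=\fixed(\OJ_{\alpha\OF})=\zero(\OF)\neq\emptyset$ by hypothesis. Lemma~\ref{lemma:KMconvergence} gives asymptotic regularity $\|x_k - \ON(x_k)\| \to 0$, hence $\|x_k - \OJ_{\alpha\OF}(x_k)\|\to 0$.

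It remains to promote asymptotic regularity to convergence of the sequence itself, which is the main obstacle because the usual Hilbert-space arguments (demiclosedness, weak convergence) are unavailable in a non-Euclidean setting. I would close the argument using finite-dimensionality and Fej\'er monotonicity in the norm $\|\cdot\|$: for any $x^\star\in\zero(\OF)$, nonexpansiveness of $\OJ_{\alpha\OF}$ yields $\|x_{k+1}-x^\star\| \leq \|x_k - x^\star\|$, so $\{x_k\}$ is bounded and admits a convergent subsequence $x_{k_j}\to \bar{x}$. Continuity of $\OJ_{\alpha\OF}$ together with $\|x_k - \OJ_{\alpha\OF}(x_k)\|\to 0$ forces $\bar{x}\in\fixed(\OJ_{\alpha\OF})=\zero(\OF)$. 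Applying Fej\'er monotonicity with $x^\star = \bar{x}$ shows $\|x_k - \bar{x}\|$ is non-increasing, and since it has a subsequence converging to $0$, the entire sequence converges to $\bar{x}$, completing the proof.
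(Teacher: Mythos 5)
Your proof is correct and follows essentially the same route as the paper: Statement~(i) from Lemma~\ref{lemma:contractionResolvent} plus the Banach fixed-point theorem, and Statement~(ii) from the averagedness of the resolvent (Lemma~\ref{lemma:averagedResolvent}) combined with Lemma~\ref{lemma:KMconvergence}. The only difference is that you explicitly supply the Fej\'er-monotonicity and subsequence argument that upgrades asymptotic regularity to convergence of the iterates in finite dimensions, a step the paper leaves implicit in its citation of Lemma~\ref{lemma:KMconvergence}.
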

\begin{proof}
	Statement~\ref{proximalStrong} holds due to Lemma~\ref{lemma:contractionResolvent}. Statement~\ref{proximalLipschitz} holds since Lipschitzness of $\OF$ implies that $\OJ_{\alpha\OF}$ is averaged together with Lemma~\ref{lemma:KMconvergence}.
\end{proof}


\begin{algo}
	The \emph{Cayley method} corresponds to the fixed point iteration
	\begin{equation}\label{eq:CayleyIteration}
	x_{k+1} = \OR_{\alpha\OF}(x_k) = 2(\Id + \alpha\OF)^{-1}(x_k) - x_k.
	\end{equation} 
\end{algo}	

\begin{theorem}[Convergence guarantees for the Cayley method]\label{thm:Cayley}
	Suppose $\map{\OF}{\real^n}{\real^n}$ is globally Lipschitz with constant $\ell$ with respect to a diagonally-weighted $\ell_1$ or $\ell_\infty$ norm $\|\cdot\|$ and
	\begin{enumerate}
		\item\label{Cayley1inf} $\OF$ is strongly monotone with respect to $\|\cdot\|$ with monotonicity parameter $c > 0$. Then the iteration~\eqref{eq:CayleyIteration} converges to the unique zero, $x^*$, of $\OF$ for every $\alpha \in {]0,\frac{1}{\diagL(\OF)}]}$. Moreover, for every $k \in \mathbb{Z}_{\geq 0}$, the iteration satisfies
		$$\|x_{k+1} - x^*\| \leq \frac{1 - \alpha c}{1 + \alpha c}\|x_{k}-x^*\|,$$
		with the convergence rate being optimized at $\alpha = 1/\diagL(\OF)$. 
		\item\label{CayleyAveraged} $\OF$ is monotone with respect to $\|\cdot\|$. Then if $\zero(\OF) \neq \emptyset$, the averaged iterations
		$$x_{k+1} = \frac{1}{2}x_k + \frac{1}{2}\OR_{\alpha\OF}(x_k)$$
		correspond to the proximal point iterations~\eqref{eq:resolventIteration}, which are guaranteed to converge to an element of $\zero(\OF)$ for every $\alpha \in {]0,\infty[}$. 
	\end{enumerate}
\end{theorem}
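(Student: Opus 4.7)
The plan is to treat parts~\ref{Cayley1inf} and~\ref{CayleyAveraged} separately, since each essentially reduces to invoking a result already established earlier in the paper.

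For part~\ref{Cayley1inf}, I would argue as follows. Using Lemma~\ref{lemma:Cayley} to write $\OR_{\alpha\OF} = (\Id - \alpha\OF)\circ \OJ_{\alpha\OF}$, and then combining Lemma~\ref{lemma:fwdstepLip} (which gives $\Lip(\Id-\alpha\OF) = 1-\alpha c$ on the closed interval $\alpha \in \,]0,1/\diagL(\OF)]$) with Lemma~\ref{lemma:contractionResolvent} (which gives $\Lip(\OJ_{\alpha\OF}) = 1/(1+\alpha c)$ for all $\alpha>0$), the submultiplicativity of Lipschitz constants under composition yields
\begin{equation*}
\Lip(\OR_{\alpha\OF}) \le \frac{1-\alpha c}{1+\alpha c} < 1
\end{equation*}
for every $\alpha \in \,]0, 1/\diagL(\OF)]$, so $\OR_{\alpha\OF}$ is a contraction. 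By the standard Banach fixed-point theorem, Banach--Picard iteration of $\OR_{\alpha\OF}$ converges geometrically at exactly this rate to the unique fixed point of $\OR_{\alpha\OF}$, which coincides with the unique zero $x^\ast$ of $\OF$ by the remark following the definition of the reflected resolvent. To optimize the rate, I would differentiate $\alpha \mapsto (1-\alpha c)/(1+\alpha c)$ and observe that it is strictly decreasing in $\alpha$, so the rate is minimized at the right endpoint $\alpha = 1/\diagL(\OF)$.

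For part~\ref{CayleyAveraged}, the key step is a one-line algebraic identity:
\begin{equation*}
\tfrac{1}{2}\Id + \tfrac{1}{2}\OR_{\alpha\OF} \;=\; \tfrac{1}{2}\Id + \tfrac{1}{2}\bigl(2\OJ_{\alpha\OF} - \Id\bigr) \;=\; \OJ_{\alpha\OF}.
\end{equation*}
Thus the averaged iteration $x_{k+1} = \tfrac{1}{2}x_k + \tfrac{1}{2}\OR_{\alpha\OF}(x_k)$ is literally the proximal point iteration $x_{k+1} = \OJ_{\alpha\OF}(x_k)$, so the convergence claim is immediate from Theorem~\ref{thm:proximalconvergence}\ref{proximalLipschitz}.

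The main conceptual obstacle worth emphasizing is why part~\ref{CayleyAveraged} must be approached through the identification with the proximal point method rather than through a direct \KM argument applied to $\OR_{\alpha\OF}$. In a Hilbert space one would simply observe that $\OR_{\alpha\OF}$ is nonexpansive and then invoke averagedness of the composition $(\Id-\alpha\OF)\circ \OJ_{\alpha\OF}$; but as the remark following the definition of averaged maps warns, compositions of averaged maps need not be averaged in non-Euclidean spaces, so this route is blocked. The algebraic identity above sidesteps the issue entirely by transferring the averagedness question onto $\OJ_{\alpha\OF}$, which is genuinely averaged in the diagonally weighted $\ell_1$ or $\ell_\infty$ setting (cf.\ Lemma~\ref{lemma:averagedResolvent} in the arXiv version).
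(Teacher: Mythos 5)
Your proposal is correct and follows essentially the same route as the paper: part~\ref{Cayley1inf} is exactly the content of Theorem~\ref{thm:CayleyLip} (which the paper cites and whose proof uses the same decomposition $\OR_{\alpha\OF} = (\Id-\alpha\OF)\circ\OJ_{\alpha\OF}$ via Lemmas~\ref{lemma:Cayley}, \ref{lemma:fwdstepLip}, and~\ref{lemma:contractionResolvent}), and part~\ref{CayleyAveraged} uses the identical identity $\tfrac{1}{2}\Id + \tfrac{1}{2}(2\OJ_{\alpha\OF}-\Id) = \OJ_{\alpha\OF}$ followed by Theorem~\ref{thm:proximalconvergence}\ref{proximalLipschitz}. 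Your closing remark on why the non-Euclidean setting forces the detour through the resolvent rather than a direct averagedness-of-compositions argument is accurate and a worthwhile observation, but it matches the paper's implicit reasoning rather than departing from it.
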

\begin{proof}
	Statement~\ref{Cayley1inf} follows from Theorem~\ref{thm:CayleyLip}. Statement~\ref{CayleyAveraged} holds since
	$\frac{1}{2}\Id + \frac{1}{2}(2\OJ_{\alpha\OF} - \Id) = \OJ_{\alpha\OF},$
	and convergence follows by Theorem~\ref{thm:proximalconvergence}\ref{proximalLipschitz} since $\zero(\OF) \neq \emptyset$.
\end{proof}

We provide a comparison of the range of step sizes and Lipschitz constants as provided by the classical monotone operator theory~\cite{EKR-SB:16} and Theorems~\ref{thm:fwdstep},~\ref{thm:proximalconvergence}, and~\ref{thm:Cayley} in Table~\ref{table:comparison}. Note that in Table~\ref{table:comparison} we do not assume that the strongly monotone $\OF$ is the gradient of a strongly convex function.

\section{Finding Zeros of a Sum of Non-Euclidean Monotone Operators}
In many instances, one may wish to execute the proximal point method, Algorithm~\ref{alg:proximalpt}, to compute a zero of a monotone operator $\map{\ON}{\real^n}{\real^n}$. However, in general, the implementation of the iteration~\eqref{eq:resolventIteration} may be hindered by the difficulty in evaluating $\OJ_{\alpha \ON}$. To remedy this issue, it is often assumed that $\ON$ can be expressed as the sum of two monotone operators $\OF$ and $\OG$ where the resolvent $\OJ_{\alpha\OG}$ may be easy to compute and $\OF$ satisfies some regularity condition. Alternatively, in some situations, decomposing $\ON = \OF+ \OG$ and finding $x \in \real^n$ such that $(\OF + \OG)(x) = 0$ provides additional flexibility in choice of algorithm and may improve convergence rates. 

Motivated by the above, we consider the problem of finding an $x \in \real^n$ such that
\begin{equation}
(\OF+\OG)(x) = 0,
\end{equation}
where $\map{\OF,\OG}{\real^n}{\real^n}$ are monotone with respect to a diagonally weighted $\ell_1$ or $\ell_\infty$ norm. 

\begin{algo}[Forward-backward splitting]\label{alg:FwdBwd}
	Assume $\alpha > 0$. Then by~\cite[Section~7.1]{EKR-SB:16}
	\begin{align*}
	(\OF + \OG)(x) = 0 \quad \iff x = \OJ_{\alpha\OG}(\Id - \alpha \OF)(x).
	\end{align*}
	The \emph{forward-backward splitting method} corresponds to the fixed point iteration
	\begin{equation}\label{eq:fwdbackward}
	x_{k+1} = \OJ_{\alpha\OG}(\Id - \alpha \OF)(x_k).
	\end{equation}
	Additionally, if both $\OF$ and $\OG$ are monotone, define the \emph{averaged forward-backward splitting iterations}
	\begin{equation}\label{eq:averagedfwdbackward}
	x_{k+1} = \frac{1}{2}x_k + \frac{1}{2}\OJ_{\alpha\OG}(\Id - \alpha \OF)(x_k).
	\end{equation}
\end{algo}
\begin{theorem}[Convergence guarantees for forward-backward splitting method]
	Suppose $\map{\OF}{\real^n}{\real^n}$ is globally Lipschitz with respect to a diagonally weighted $\ell_1$ or $\ell_\infty$ norm $\|\cdot\|$ and $\map{\OG}{\real^n}{\real^n}$ is monotone with respect to the same norm.
	\begin{enumerate}
		\item\label{fwdbackwardOneMonotone} If $\OF$ is strongly monotone with respect to $\|\cdot\|$ with monotonicity parameter $c > 0$, then the iteration~\eqref{eq:fwdbackward} converges to the unique zero, $x^*$, of $\OF+\OG$ for every $\alpha \in {]0, \frac{1}{\diagL(\OF)}]}$. Moreover, for every $k \in \mathbb{Z}_{\geq 0}$, the iteration satisfies
		$$\|x_{k+1} - x^*\| \leq (1 - \alpha c)\|x_{k}-x^*\|,$$
		with the convergence rate being optimized at $\alpha = 1/\diagL(\OF)$.
		\item\label{fwdbackwardBothMonotone} If $\OF$ is monotone with respect to $\|\cdot\|$ and $\zero(\OF + \OG) \neq \emptyset$, then the iteration~\eqref{eq:averagedfwdbackward} converges to an element of $\zero(\OF + \OG)$ for every $\alpha \in {]0, \frac{1}{\diagL(\OF)}]}$.
	\end{enumerate}
\end{theorem}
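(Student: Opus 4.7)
The plan is to analyze the single-step operator
$\OT_\alpha := \OJ_{\alpha\OG}\circ(\Id - \alpha\OF)$
and show that its fixed-point set coincides with $\zero(\OF+\OG)$: indeed
$\OT_\alpha(x) = x$ iff $(\Id - \alpha\OF)(x) = (\Id + \alpha\OG)(x)$, iff
$(\OF+\OG)(x) = 0$. Existence and well-posedness of $\OJ_{\alpha\OG}$ on all of $\real^n$ follow from the non-Euclidean Minty--Browder theorem, since $\OG$ is monotone. The two parts then reduce to establishing Lipschitz properties of $\OT_\alpha$ and applying either Banach's fixed-point theorem or the \KM convergence lemma.

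For part~\ref{fwdbackwardOneMonotone}, I would combine two ingredients already in hand. By Lemma~\ref{lemma:fwdstepLip}, for $\alpha \in {]0,1/\diagL(\OF)]}$ the forward step $\Id - \alpha\OF$ is Lipschitz with constant $1 - \alpha c$. By Lemma~\ref{lemma:contractionResolvent} applied with monotonicity parameter $0$ for $\OG$, the resolvent $\OJ_{\alpha\OG}$ is nonexpansive. Since the Lipschitz constant of a composition is the product of the Lipschitz constants, $\Lip(\OT_\alpha) \leq 1 - \alpha c < 1$. This is a genuine contraction on $\real^n$ with the chosen norm, so by the Banach fixed-point theorem the iterates converge to the unique fixed point $x^*$, which is the unique zero of $\OF+\OG$ (uniqueness also following from strong monotonicity of $\OF+\OG$). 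The per-step bound $\|x_{k+1}-x^*\| \leq (1-\alpha c)\|x_k - x^*\|$ is immediate, and minimizing $1 - \alpha c$ over the admissible range yields the optimal $\alpha = 1/\diagL(\OF)$.

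For part~\ref{fwdbackwardBothMonotone}, with $\OF$ only monotone, Lemma~\ref{lemma:fwdstepLip} gives that $\Id - \alpha\OF$ is nonexpansive for $\alpha \in {]0, 1/\diagL(\OF)]}$, and $\OJ_{\alpha\OG}$ is nonexpansive by Lemma~\ref{lemma:contractionResolvent}. Hence $\OT_\alpha$ is nonexpansive. The averaged iteration~\eqref{eq:averagedfwdbackward} is a \KM iteration with $\theta = 1/2$ applied to $\OT_\alpha$. Since $\zero(\OF+\OG) = \fixed(\OT_\alpha) \neq \emptyset$, Lemma~\ref{lemma:KMconvergence} yields asymptotic regularity $\|x_k - \OT_\alpha(x_k)\|\to 0$ at rate $\bigO(1/\sqrt{k})$.

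The main obstacle is that the \KM lemma as stated gives only asymptotic regularity, not convergence of the sequence itself; this is where the remark about composition of averaged maps failing in non-Euclidean spaces would bite if one tried to short-circuit by claiming $\OT_\alpha$ is averaged. To bridge the gap I would argue directly: pick $x^* \in \fixed(\OT_\alpha)$, use convexity of the norm together with nonexpansiveness of $\OT_\alpha$ to show $\|x_{k+1}-x^*\| \leq \|x_k - x^*\|$, so the sequence $\{x_k\}$ is bounded (Fej\'er-monotone in norm). In finite dimensions a subsequence $x_{k_j}$ converges to some $x^\infty$; by asymptotic regularity and continuity of $\OT_\alpha$, $x^\infty \in \fixed(\OT_\alpha)$. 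Applying the Fej\'er inequality with $x^* = x^\infty$ shows $\|x_k - x^\infty\|$ is non-increasing and has a subsequence tending to $0$, so the whole sequence converges to $x^\infty \in \zero(\OF+\OG)$.
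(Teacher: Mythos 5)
Your proposal is correct and follows essentially the same route as the paper: part~\ref{fwdbackwardOneMonotone} via $\Lip(\OJ_{\alpha\OG}\circ(\Id-\alpha\OF)) \leq 1\cdot(1-\alpha c)$ from Lemma~\ref{lemma:fwdstepLip}, Lemma~\ref{lemma:contractionResolvent}, and the composition rule, and part~\ref{fwdbackwardBothMonotone} via the \KM machinery of Lemma~\ref{lemma:KMconvergence}. Your closing Fej\'er-monotonicity argument is a welcome addition rather than a deviation: the paper's one-line proof of part~\ref{fwdbackwardBothMonotone} cites only Lemma~\ref{lemma:KMconvergence}, which as stated yields asymptotic regularity $\|x_k-\OT_\alpha(x_k)\|\to 0$ but not convergence of the iterates, so the boundedness--subsequence--Fej\'er step you supply is exactly what is needed to close that gap in finite dimensions.
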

\begin{proof}
	Statement~\ref{fwdbackwardOneMonotone} follows from the fact that the Lipschitz constant of a composition of 
maps is the product of the Lipschitz constants together with Lemma~\ref{lemma:fwdstepLip}. Statement~\ref{fwdbackwardBothMonotone} follows from Lemma~\ref{lemma:KMconvergence}.
\end{proof}

Compared to the Euclidean case, if both $\OF$ and $\OG$ are monotone, then the averaged iterations~\eqref{eq:averagedfwdbackward} must be applied to compute a zero of $\OF + \OG$. In the Euclidean case, both $\OJ_{\alpha\OG}$ and $(\Id - \alpha \OF)$ are averaged and therefore the composition is also averaged. 


\begin{algo}[Peaceman-Rachford and Douglas-Rachford splitting]
	Let $\alpha > 0$. Then by~\cite[Section~7.3]{EKR-SB:16},
	\begin{equation}\label{eq:PRsplitting}
	(\OF + \OG)(x) = 0 \quad \iff \quad \OR_{\alpha\OF} \OR_{\alpha\OG} z = z \text{ and } x = \OJ_{\alpha\OG} z.
	\end{equation}
	The \emph{Peaceman-Rachford splitting method} corresponds to the fixed point iteration
	\begin{equation}\label{eq:PRiteration}
	\begin{aligned}
	x_{k+1/2} &= \OJ_{\alpha\OG}(z_k), \\
	z_{k+1/2} &= 2x_{k+1/2} - z_k, \\
	x_{k+1} &= \OJ_{\alpha\OF}(z_{k+1/2}), \\
	z_{k+1} &= 2x_{k+1} - z_{k+1/2}.
	\end{aligned}
	\end{equation}
	If both $\OF$ and $\OG$ are monotone, the term $\OR_{\alpha\OF} \OR_{\alpha\OG}$ in~\eqref{eq:PRsplitting} is averaged to yield the fixed point equation
	\begin{equation}\label{eq:DRsplitting}
	(\OF + \OG)(x) = 0 \; \iff \; \frac{1}{2}(\Id + \OR_{\alpha\OF} \OR_{\alpha\OG})z = z \text{ and } x = \OJ_{\alpha\OG} z.
	\end{equation}
	The fixed point iteration corresponding to~\eqref{eq:DRsplitting} is called the \emph{Douglas-Rachford splitting method} and is given by
	\begin{equation}\label{eq:DRiteration}
	\begin{aligned}
	x_{k+1/2} &= \OJ_{\alpha\OG}(z_k), \\
	z_{k+1/2} &= 2x_{k+1/2} - z_k, \\
	x_{k+1} &= \OJ_{\alpha\OF}(z_{k+1/2}), \\
	z_{k+1} &= z^k + x_{k+1} - x_{k+1/2}.
	\end{aligned}
	\end{equation}
\end{algo}

\begin{theorem}[Convergence guarantees for Peaceman-Rachford and Douglas-Rachford splitting methods]
	Suppose both $\map{\OF}{\real^n}{\real^n}$ and $\map{\OG}{\real^n}{\real^n}$ are globally Lipschitz with respect to a diagonally weighted $\ell_1$ or $\ell_\infty$ norm $\|\cdot\|$ and (without loss of generality) $\OG$ is monotone with respect to the same norm.
	\begin{enumerate}
		\item\label{PR} If $\OF$ is strongly monotone with respect to $\|\cdot\|$ with monotonicity parameter $c > 0$, then the sequence of $\{x_k\}_{k=0}^\infty$ generated by the iteration~\eqref{eq:PRiteration} converges to the unique zero, $x^*$, of $\OF+\OG$ for every $\ds\alpha \in {\Big]0, \min\Big\{\frac{1}{\diagL(\OF)},\frac{1}{\diagL(\OG)}\Big\}\Big]}$. Moreover, for every $k \in \mathbb{Z}_{\geq 0}$, the iteration satisfies
		$$\|x_{k+1} - x^*\| \leq \frac{1-\alpha c}{1 + \alpha c}\|x_{k}-x^*\|,$$
		with the convergence rate being optimized at $\alpha = \min\Big\{\frac{1}{\diagL(\OF)},\frac{1}{\diagL(\OG)}\Big\}$.
		\item\label{DR} If $\OF$ is monotone with respect to $\|\cdot\|$ and $\zero(\OF + \OG) \neq \emptyset$, then the sequence of $\{x_k\}_{k=0}^\infty$ generated by the iteration~\eqref{eq:DRiteration} converges to an element of $\zero(\OF + \OG)$ for every $\ds\alpha \in {\Big]0, \min\Big\{\frac{1}{\diagL(\OF)},\frac{1}{\diagL(\OG)}\Big\}\Big]}$.
	\end{enumerate}
\end{theorem}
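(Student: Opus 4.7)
The plan is to reduce both statements to applying the explicit Lipschitz bound of Theorem~\ref{thm:CayleyLip} (together, for part~\ref{DR}, with Lemma~\ref{lemma:KMconvergence}) to a single fixed-point iteration in the auxiliary variable $z_k$. In both cases the four-line iterations hide the fact that the dynamics in $z$ is a one-shot application of a Cayley-composition operator; once this is exposed, the theorem is a direct consequence of earlier results.

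First I would verify by direct substitution into~\eqref{eq:PRiteration} that the Peaceman--Rachford update can be compressed into
\[
z_{k+1} = \OR_{\alpha\OF}\OR_{\alpha\OG}(z_k),
\]
since $\OR_{\alpha\OG}(z_k) = 2\OJ_{\alpha\OG}(z_k) - z_k = z_{k+1/2}$ and likewise $\OR_{\alpha\OF}(z_{k+1/2}) = z_{k+1}$. For statement~\ref{PR}, Theorem~\ref{thm:CayleyLip} then gives $\Lip(\OR_{\alpha\OG}) \leq 1$ (since $\OG$ is monotone with $c=0$) and $\Lip(\OR_{\alpha\OF}) = (1-\alpha c)/(1+\alpha c) < 1$, both valid for $\alpha$ in the intersection of the two ranges, which is exactly the range stated in the theorem. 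The composition is therefore a strict contraction on $z$ with factor $(1-\alpha c)/(1+\alpha c)$, and Banach's fixed-point theorem delivers a unique $z^* = \OR_{\alpha\OF}\OR_{\alpha\OG}(z^*)$. By~\eqref{eq:PRsplitting}, $x^* := \OJ_{\alpha\OG}(z^*)$ is the unique zero of $\OF+\OG$; to transfer the rate to $x_k$, I would use $x_{k+1} = \OJ_{\alpha\OF}(\OR_{\alpha\OG}(z_k))$ and $x^* = \OJ_{\alpha\OF}(\OR_{\alpha\OG}(z^*))$, combine the $z$-contraction with Lemma~\ref{lemma:contractionResolvent} for $\OJ_{\alpha\OF}$ and the nonexpansiveness of $\OR_{\alpha\OG}$, and read off the claimed rate and the optimal choice $\alpha = \min\{1/\diagL(\OF), 1/\diagL(\OG)\}$.

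For statement~\ref{DR}, a symmetric calculation rewrites~\eqref{eq:DRiteration} as
\[
z_{k+1} = \tfrac{1}{2}\bigl(\Id + \OR_{\alpha\OF}\OR_{\alpha\OG}\bigr)(z_k),
\]
which is precisely the \KM iteration with $\theta = 1/2$ applied to $\OR_{\alpha\OF}\OR_{\alpha\OG}$. Since both $\OF$ and $\OG$ are now monotone, Theorem~\ref{thm:CayleyLip} makes both Cayley operators nonexpansive on the stated step-size range, hence so is their composition. The hypothesis $\zero(\OF+\OG) \neq \emptyset$, combined with~\eqref{eq:DRsplitting}, gives $\fixed(\OR_{\alpha\OF}\OR_{\alpha\OG}) \neq \emptyset$, so Lemma~\ref{lemma:KMconvergence} yields $z_k \to z^* \in \fixed(\OR_{\alpha\OF}\OR_{\alpha\OG})$, and continuity of the resolvents then forces $x_k \to x^* = \OJ_{\alpha\OG}(z^*) \in \zero(\OF+\OG)$.

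The main obstacle I anticipate is the one flagged earlier in the paper: in the non-Euclidean setting, averagedness is not preserved under composition, so the textbook Euclidean argument for Peaceman--Rachford, which proceeds via averaged resolvents and their compositions, is unavailable. This is exactly why strong monotonicity of one of the two operators is required in~\ref{PR}: it is what delivers, via Theorem~\ref{thm:CayleyLip}, a quantitative Lipschitz factor of $\OR_{\alpha\OF}$ strictly below $1$, which then dominates the nonexpansive $\OR_{\alpha\OG}$ in the composition. It is also why the step-size constraint $\alpha \leq \min\{1/\diagL(\OF), 1/\diagL(\OG)\}$ appears instead of an unrestricted $\alpha > 0$: both Cayley operators must individually be controlled through the hypotheses of Theorem~\ref{thm:CayleyLip}, rather than benefiting from an abstract averagedness argument.
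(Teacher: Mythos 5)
Your proposal is correct and follows essentially the same route as the paper, whose proof is simply the one-line citation of Theorem~\ref{thm:CayleyLip} for statement~\ref{PR} and of Theorem~\ref{thm:CayleyLip} together with Lemma~\ref{lemma:KMconvergence} for statement~\ref{DR}; you have merely made explicit the compression of the four-line updates into $z_{k+1} = \OR_{\alpha\OF}\OR_{\alpha\OG}(z_k)$ and its averaged variant, which is exactly the intended argument. The only point the paper also leaves implicit, and which deserves the extra care you sketch, is transferring the contraction factor from the $z$-sequence to the stated per-step bound on $\|x_{k+1}-x^*\|$.
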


\begin{proof}
	Statement~\ref{PR} holds by Theorem~\ref{thm:CayleyLip}, while statement~\ref{DR} holds by Theorem~\ref{thm:CayleyLip} and Lemma~\ref{lemma:KMconvergence}.
\end{proof}

\section{Application to recurrent neural networks}
\subsection{Analysis and various iterations}
Consider the continuous-time recurrent neural network
\begin{equation}\label{eq:inn}
\begin{aligned}
\dot{x} = -x + \Phi(Ax + Bu + b) =: \OF(x,u),\\
\end{aligned}
\end{equation}
where $x \in \real^n, u \in \real^m, A \in \real^{n \times n}, B \in \real^{n \times m}, b \in \real^n$, and $\map{\Phi}{\real^n}{\real^n}$ is an activation function applied entrywise, i.e., $\Phi(x) = (\phi(x_1),\dots,\phi(x_n))^\top$. In this example, we consider the case that $\phi$ is a LeakyReLU activation function, i.e., $\phi(x) = \max\{x,ax\}$ for some $a \in {]0,1[}$. In~\cite{AD-AVP-FB:21k}, it was shown that a sufficient condition for the contractivity of this neural network is the existence of weights $\eta \in \realpositive^n$ such that $\mu_{\infty,[\eta]^{-1}}(A) < 1$. If this condition holds, then the recurrent neural network~\eqref{eq:inn} is contracting with respect to $\|\cdot\|_{\infty,[\eta]^{-1}}$ with rate $1 - \phi(\mu_{\infty,[\eta]^{-1}}(A))$. In what follows, we define $\gamma := \mu_{\infty,[\eta]^{-1}}(A) < 1$.

Suppose that, for fixed $u$, we are interested in efficiently computing the unique equilibrium point $x^*(u)$ of $F(x,u)$. 
Since $\OF(x,u)$ is contracting with respect to $\|\cdot\|_{\infty,[\eta]^{-1}}$, $-\OF(x,u)$ is strongly monotone with monotonicity parameter $1 - \phi(\gamma)$. As a consequence, applying the forward step method, Algorithm~\ref{eq:forwardstep} to compute $x^*(u)$ yields the iteration
\begin{equation}\label{eq:averagediteration}
x_{k+1} = (1 - \alpha)x_k + \alpha \phi(Ax_k + Bu + b),
\end{equation}
which is the iteration proposed in~\cite{SJ-AD-AVP-FB:21f}. This iteration is guaranteed to converge for every $\ds\alpha \in {\Big]0,\frac{1}{1-\min_{i\in \until{n}}\min\{a \cdot (A)_{ii},(A)_{ii}\}}\Big]}$ with contraction factor $1 - \alpha(1 - \phi(\gamma))$. 

However, rather than viewing finding an equilibrium of~\eqref{eq:inn} as finding a zero of a non-Euclidean monotone operator, it is also possible to view it as an operator splitting problem. In particular, in the spirit of~\cite[Theorem~1]{EW-JZK:20}, we prove that finding a fixed point of $\Phi(Ax + Bu + b)$ corresponds to an appropriate operator splitting problem under suitable assumptions on $\Phi$. However, first we must define the proximal operator.
\begin{definition}[Proximal operator~{\cite[Definition~12.23]{HHB-PLC:17}}]
	Suppose $\map{f}{\real^n}{{]-\infty,\infty]}}$ is a proper lower semicontinuous convex function. Then the \emph{proximal operator of $f$} evaluated at $x \in \real^n$ is
	\begin{equation}
	\prox_f(x) = \argmin_{z \in \real^n} \frac{1}{2}\|x - z\|_{2}^2 + f(z).
	\end{equation}
\end{definition}
\begin{proposition}
	Suppose $\phi$ is the proximal operator of a continuously differentiable convex function $f$. Then finding an equilibrium point $x^*(u)$ of~\eqref{eq:inn} is equivalent to the operator splitting problem $(\OF+\OG)(x^*(u)) = 0$, where
	\begin{equation}\label{eq:splitting}
	\OF(z) = (I_n - A)(z) - (Bu + b), \qquad \OG(z) = df(z),
	\end{equation}
	where we denote $df(z) = (f'(z_1), \dots, f'(z_n))^\top$.
\end{proposition}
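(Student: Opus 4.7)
The plan is to exploit the standard characterization of the proximal operator via the subdifferential (resp.\ gradient, here): for a proper, lower semicontinuous convex $\map{f}{\real}{\real}$,
\begin{equation*}
y = \prox_f(x) \quad \iff \quad x - y \in \partial f(y),
\end{equation*}
which, under the standing assumption that $f$ is continuously differentiable, reduces to $y = \prox_f(x) \iff x = y + f'(y)$. Applying this componentwise (since $\Phi$ is defined by entrywise application of $\phi = \prox_f$) yields the vectorized identity
\begin{equation*}
\Phi(v) = w \quad \iff \quad v = w + df(w),
\end{equation*}
where $df(w) = (f'(w_1),\dots,f'(w_n))^\top$ as in the statement.

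From here, the equivalence is a one-line rearrangement. A point $x^*(u)$ is an equilibrium of the dynamics~\eqref{eq:inn} if and only if $x^*(u) = \Phi(Ax^*(u) + Bu + b)$. Setting $v = Ax^*(u) + Bu + b$ and $w = x^*(u)$ in the identity above, this is equivalent to
\begin{equation*}
Ax^*(u) + Bu + b = x^*(u) + df(x^*(u)),
\end{equation*}
i.e., $(I_n - A)x^*(u) - (Bu + b) + df(x^*(u)) = 0$, which is precisely $(\OF+\OG)(x^*(u)) = 0$ with the splitting given in~\eqref{eq:splitting}.

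The only step that requires care is the justification of the proximal-operator characterization and its componentwise application. I would first cite the standard characterization (e.g., as in~\cite[Proposition~16.44]{HHB-PLC:17} or the equivalent statement for smooth $f$), then remark that since $\Phi$ is defined by applying $\phi$ to each coordinate and $f$ is scalar, the induced map $df$ is itself the gradient of the separable convex function $F(z) := \sum_{i=1}^n f(z_i)$, so the componentwise application is legitimate. No contraction or monotonicity theory is needed for this proposition; it is purely an algebraic equivalence encoded by the defining property of $\prox_f$.
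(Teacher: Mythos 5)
Your proof is correct and takes essentially the same route as the paper: both hinge on the identity $\prox_f = (\Id + df)^{-1} = \OJ_{df}$ applied entrywise to rewrite the equilibrium condition $x = \Phi(Ax+Bu+b)$ as $Ax+Bu+b = x + df(x)$, i.e., $(\OF+\OG)(x)=0$. The only cosmetic difference is that you unroll this resolvent characterization by hand, whereas the paper cites the forward-backward fixed-point equivalence with $\alpha = 1$ to reach the same conclusion.
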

\begin{proof}
	First, we note that computing an equilibrium point of~\eqref{eq:inn} is equivalent to computing a fixed-point $x = \Phi(Ax + Bu + b)$. Since $\phi(x_i) = \prox_f(x_i)$, by~\cite[Proposition~16.44]{HHB-PLC:17} we have that $\Phi(x) = \OJ_{df}(x)$. Thus, the fixed-point problem is equivalent to
	\begin{equation}
	x = \OJ_{df}(Ax + Bu + b),
	\end{equation}
	which implies the result since this corresponds to Algorithm~\ref{alg:FwdBwd} with $\alpha = 1$. Therefore, any equilibrium point of~\eqref{eq:inn} is a zero of the splitting problem $(\OF + \OG)(x) = 0$ where $\OF$ and $\OG$ are defined as in~\eqref{eq:splitting}.
\end{proof}
We note that although this assumption on $\phi$ appears restrictive, if the assumption of smoothness is relaxed, many common activation functions satisfy the assumption, as is noted in the following little-known proposition.
\begin{proposition}[{\cite[Proposition~2.4]{PLC-JCP:08}}]
	Let $\map{\phi}{\real}{\real}$. Then $\phi$ is the proximal operator of a proper lower semicontinuous convex function $\map{f}{\real}{{]-\infty,\infty]}}$ if and only if $\phi$ satisfies
	$$0 \leq \frac{\phi(x) - \phi(y)}{x - y} \leq 1, \quad \text{for all } x,y \in \real, x\neq y.$$
\end{proposition}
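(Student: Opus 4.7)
The plan is to establish the two directions separately, leveraging the fact that in one dimension monotone operators admit very explicit descriptions as nondecreasing functions.

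For the forward direction ($\Rightarrow$), I would start from the first-order optimality condition for the proximal operator: if $\phi = \prox_f$, then by definition $\phi(x)$ minimizes $z \mapsto \tfrac{1}{2}(x - z)^2 + f(z)$, and convex subdifferential calculus gives $x - \phi(x) \in \partial f(\phi(x))$ for all $x \in \real$. Fix $x, y \in \real$ with $x \neq y$ and set $p = \phi(x)$, $q = \phi(y)$. Applying the monotonicity of $\partial f$ to the pairs $(p, x - p)$ and $(q, y - q)$ yields $(p - q)\bigl((x - p) - (y - q)\bigr) \geq 0$, which rearranges to $(\phi(x) - \phi(y))(x - y) \geq (\phi(x) - \phi(y))^2$. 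Dividing by $(x - y)^2 > 0$ then gives $\frac{\phi(x) - \phi(y)}{x - y} \geq \left(\frac{\phi(x) - \phi(y)}{x - y}\right)^2$, which forces the ratio to lie in $[0,1]$.

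For the converse direction ($\Leftarrow$), the strategy is to construct $f$ by integrating a maximal monotone operator obtained from $\phi$. The hypothesis implies $\phi$ is nondecreasing and $1$-Lipschitz, hence continuous. I would define a set-valued operator $\map{M}{\real}{\real}$ by
\begin{equation*}
M(y) := \{\, x - y \;:\; \phi(x) = y \,\}, \qquad y \in \Img(\phi),
\end{equation*}
and extend $M$ outside $\Img(\phi)$ by the natural continuous completion (for values of $y$ above/below $\Img(\phi)$, no extension is needed since those points are not in the domain). The key computation is: for $(y_1, m_1), (y_2, m_2) \in \mathrm{graph}(M)$ with $m_i = x_i - y_i$ and $\phi(x_i) = y_i$, the expression $(m_1 - m_2)(y_1 - y_2) = (x_1 - x_2)(\phi(x_1) - \phi(x_2)) - (\phi(x_1) - \phi(x_2))^2$ is nonnegative exactly because of the upper bound $\tfrac{\phi(x_1)-\phi(x_2)}{x_1-x_2} \leq 1$. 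So $M$ is monotone. Because $\phi$ is continuous and nondecreasing, $\Img(\phi)$ is an interval and the vertical fibers $M(y)$ are closed intervals, from which one checks that no proper monotone extension of $M$ exists, i.e., $M$ is maximal monotone.

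Next I would invoke Rockafellar's theorem that every maximal cyclically monotone operator is the subdifferential of a proper lower semicontinuous convex function, noting that in $\real$ every monotone operator is automatically cyclically monotone (a direct check using only the sign structure of pairwise differences). This yields $f$ proper, lsc, convex with $\partial f = M$. Finally I would verify $\phi = \prox_f$: for any $x$, the point $z = \phi(x)$ satisfies $x - z \in M(z) = \partial f(z)$, which is exactly the first-order optimality condition for the strictly convex problem defining $\prox_f(x)$, and strict convexity guarantees uniqueness of the minimizer. The main obstacle I expect is step involving the maximality of $M$: verifying that the vertical fibers encoded by constant stretches of $\phi$ together with the natural extension at endpoints of $\Img(\phi)$ actually produce a maximal graph, rather than one that can be properly enlarged while preserving monotonicity. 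Everything else is either direct computation or a citation of standard convex-analytic facts.
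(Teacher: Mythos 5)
The paper does not prove this proposition at all; it imports it verbatim as \cite[Proposition~2.4]{PLC-JCP:08}, so any self-contained argument you give is necessarily ``a different route,'' and yours is essentially the standard one and is correct in outline. The forward direction is clean: the optimality condition $x-\phi(x)\in\partial f(\phi(x))$ plus monotonicity of $\partial f$ gives $(\phi(x)-\phi(y))(x-y)\ge(\phi(x)-\phi(y))^2$ (firm nonexpansiveness), and dividing by $(x-y)^2$ forces the difference quotient into $[0,1]$. The converse is also sound: your $M=\phi^{-1}-\Id$ is exactly the candidate for $\partial f$, the monotonicity computation is right, one-dimensional monotone operators are automatically cyclically monotone, and Rockafellar then produces $f$. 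The one place you correctly flag as delicate --- maximality of $M$ --- is where I would change the argument: rather than analyzing the fibers of $\phi$ and worrying about how to complete the graph at the endpoints of $\Img(\phi)$ (which need not be closed, e.g.\ $\phi(x)=-e^{-x}$ for $x\ge 0$ extended affinely, with $\Img(\phi)={]-\infty,0[}$), invoke Minty's parametrization directly: a monotone $M$ on $\real$ is maximal if and only if $\Id+M$ is surjective, equivalently $(\Id+M)^{-1}$ is a firmly nonexpansive map defined on all of $\real$. By construction $(\Id+M)^{-1}=\phi$, which has full domain and is firmly nonexpansive precisely because of the hypothesis $0\le\frac{\phi(x)-\phi(y)}{x-y}\le 1$; no extension of $M$ is needed, and the graph $\setdef{(\phi(x),\,x-\phi(x))}{x\in\real}$ is already maximal because the sum of its coordinates sweeps out all of $\real$. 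With that substitution your proof closes completely; the remaining steps (cyclic monotonicity in one dimension, uniqueness of the minimizer by strict convexity) are routine and correctly cited.
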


For the LeakyReLU activation function, it is known that the $f$ corresponding to $\phi$ is given by $f(z_i) = \frac{1-a}{2a}\min\{z_i,0\}^2,$~\cite[Table~1]{JL-CF-ZL:19} which is continuously differentiable and which can be written in vector form $df(z) = \frac{1-a}{a} \min\{z,0\}$. Moreover, $df$ is Lipschitz with constant $(1-a)/a$. Now we will show that under the sufficient condition $\gamma < 1$, $\OF$ is strongly monotone with respect to the norm $\|\cdot\|_{\infty,[\eta]^{-1}}$ and $\OG$ is monotone with respect to the same norm. 

Since $\gamma < 1$,
$$-\mu_{\infty,[\eta]^{-1}}(-(I_n - A)) = 1 - \mu_{\infty,[\eta]^{-1}}(A) = 1 - \gamma > 0,$$
which implies $\OF$ is strongly monotone with monotonicity parameter $1 - \gamma$. Moreover, checking that $\OG$ is monotone is straightforward since $df$ is Lipschitz and $\jac{df}(z)$ is diagonal for every $z \in \real^n$ for which it exists and has diagonal entries in $[0,(1-a)/a]$. As a consequence, for almost every $z \in \real^n$, $\mu_{\infty,[\eta]^{-1}}(-\jac{df}(z)) \leq 0$, which implies monotonicity of $\OG$ with respect to $\|\cdot\|_{\infty,[\eta]^{-1}}$.

Therefore, we can consider different operator splitting algorithms to compute the equilibrium of~\eqref{eq:inn}. First, the forward-backward splitting method as applied to this problem is
\begin{equation}\label{eq:FB-RNN}
\begin{aligned}
x_{k+1} &= \OJ_{\alpha\OG}((1 - \alpha)x_k + \alpha (Ax_k + Bu + b)) \\
&= \prox_{\alpha f}((1 - \alpha)x_k + \alpha (Ax_k + Bu + b)).
\end{aligned}
\end{equation}
Since $\OF$ is Lipschitz, this iteration is guaranteed to converge to the unique fixed point of~\eqref{eq:inn}. Moreover, the contraction factor for this iteration is $1 - \alpha(1 - \gamma)$ for $\alpha \in {]0, \frac{1}{1 - \min_i (A)_{ii}}]}$, with contraction factor being maximized at $\alpha^* = \frac{1}{1 - \min_i (A)_{ii}}$. Note that compared to the iteration~\eqref{eq:averagediteration}, the forward-backward iteration has a larger allowable range of step sizes and improved contraction factor at the expense of computing a proximal operator at each iteration.


Alternatively, the fixed point may be computed by means of the Peaceman-Rachford splitting algorithm, which can be written 
\begin{equation}\label{eq:PR-RNN}
\begin{aligned}
x_{k+1/2} &= (I_n + \alpha(I_n - A))^{-1}(z_{k} + \alpha(Bu + b)), \\
z_{k+1/2} &= 2x_{k+1/2} - z_k, \\
x_{k+1} &= \prox_{\alpha f}(z_{k+1/2}), \\
z_{k+1} &= 2x_{k+1} - z_{k+1/2}. 
\end{aligned}
\end{equation}
Since both $\OF$ and $\OG$ are Lipschitz, this iteration converges to the unique fixed point of~\eqref{eq:inn}. Moreover, the contraction factor is $\ds\frac{1 - \alpha(1 - \gamma)}{1 + \alpha(1 - \gamma)}$ for 
$\ds\alpha \in {\Big]0, \min\Big\{\frac{1}{1 - \min_i (A)_{ii}}, \frac{a}{1-a}\Big\}\Big]},$
which comes from the Lipschitz constants of $\OF$ and $\OG$. In other words, the contraction factor is improved for Peaceman-Rachford compared to the forward-backward splitting, but the stepsize is additionally limited by the Lipschitz constant of $df$. For recurrent neural networks where $A$ has large negative diagonal entries and $(I_n + \alpha(I_n - A))$ may be easily inverted, this splitting method may be preferred.

\subsection{Numerical implementations}
\begin{figure}
	\includegraphics[width=0.95\linewidth]{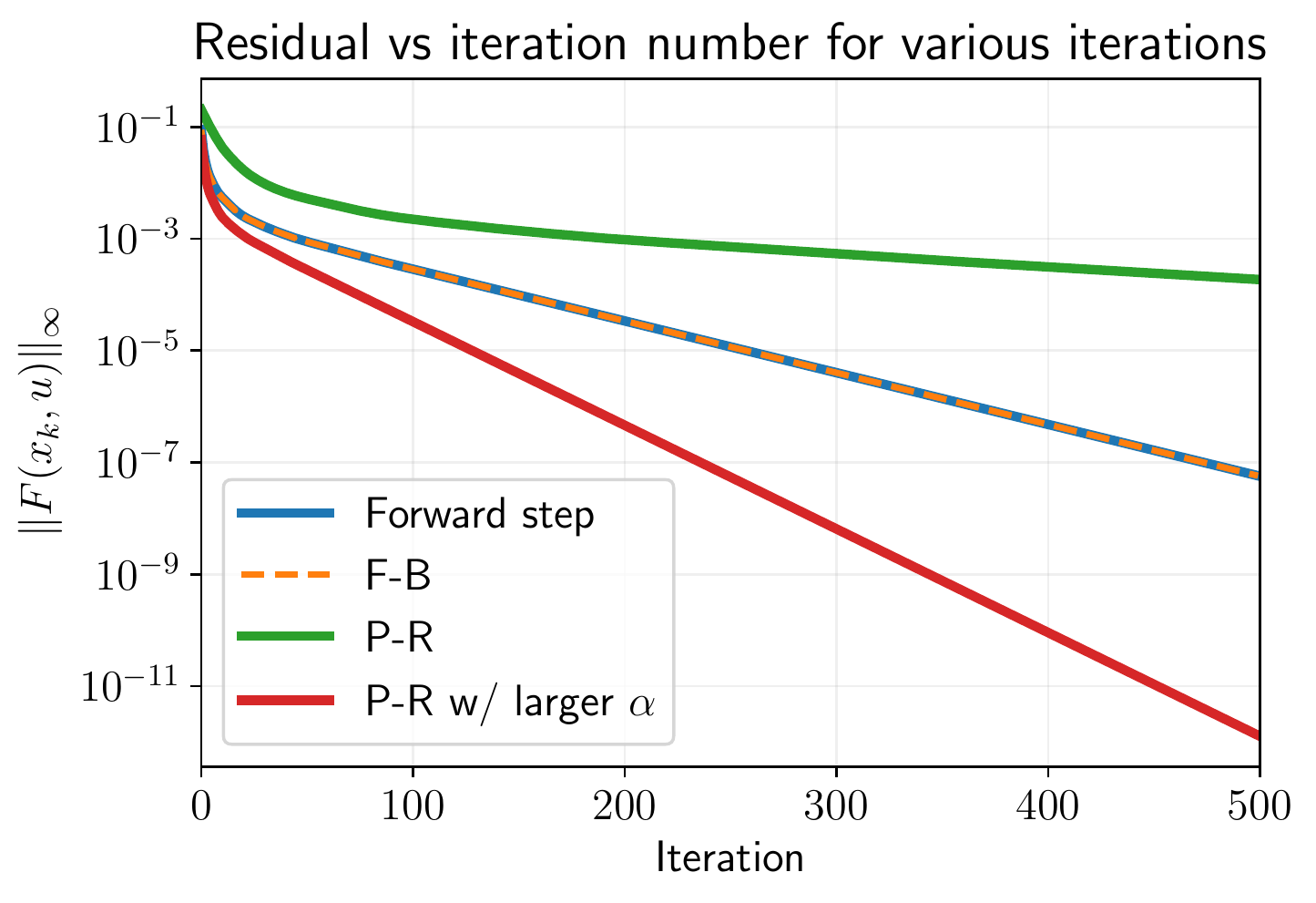}
	\caption{Residual versus number of iterations for forward-step method~\eqref{eq:averagediteration}, forward-backward (F-B) splitting~\eqref{eq:FB-RNN}, and Peaceman-Rachford (P-R) splitting~\eqref{eq:PR-RNN} for computing the equilibrium of the recurrent neural network~\eqref{eq:inn}. Curves for the forward-step method and forward-backward splitting are directly on top of one another.}\label{fig:iterations}
\end{figure}
To assess the efficacy of the iterations in~\eqref{eq:averagediteration},~\eqref{eq:FB-RNN}, and~\eqref{eq:PR-RNN}, we generated $A, B, b, u$ in~\eqref{eq:inn} and applied the iterations to compute the equilibrium. We generate $A \in \real^{200 \times 200}, B \in \real^{200 \times 50}, u \in \real^{50}, b \in \real^{200}$ with entries normally distributed as $A_{ij}, B_{ij}, b_{i} \sim \mathcal{N}(0, 1/\sqrt{200})$ and $u_{i} \sim \mathcal{N}(0, 1/\sqrt{50})$. To ensure that $A \in \real^{200 \times 200}$ satisfies the constraint $\mu_{\infty,[\eta]^{-1}}(A) < 1$ for some $\eta \in \realpositive^n$, we pick $[\eta] = I_n$ and project $A$ onto the convex polytope $\setdef{A \in \real^{n \times n}}{\mu_{\infty}(A) \leq 0.99}$. We additionally computed $\mu_2(A) \approx 1.0034$, so $\OF$ is not strongly monotone with respect to $\|\cdot\|_2$.

For all iterations, we initialize $x_0$ at the origin and for the Peaceman-Rachford iteration, we additionally initialize $z_0$ at the origin. We set $a = 0.1$ in LeakyReLU and for each iteration pick the largest theoretically allowable stepsize, which for the forward-step method and forward-backward splitting was $\ds \frac{1}{1-\min_i (A)_{ii}} \approx 0.9015$. For Peaceman-Rachford splitting, the largest theoretically allowable stepsize was $a/(1-a) \approx 0.1111$, but we also simulated using Peaceman-Rachford splitting with $\alpha = 0.9015$. The plots of the residual $\|x_k - \Phi(Ax_k + Bu + b)\|_{\infty} = \|F(x_k,u)\|_{\infty}$ versus the number of iterations is shown in Figure~\ref{fig:iterations}.

We see that, in this instance, both forward-step and forward-backward splitting methods for computing the equilibrium of~\eqref{eq:inn} converge at the same rate. This result agrees with the theory since $\gamma = 0.99 > 0$, so that $\phi(\gamma) = \gamma$ and the estimated contraction factor for both the forward step method and forward-backward splitting is $1 - \alpha(1 - \gamma) \approx 0.9910$. For the Peaceman-Rachford splitting method, for the theoretically largest allowable $\alpha = 1/9$, the estimated contraction factor is $\frac{1-\alpha(1-\gamma)}{1+\alpha(1-\gamma)} \approx 0.9978$, which is very close to $1$ and thus justifies the slow rate of convergence for the iterations in this case. However, if we let $\alpha = 0.9015$ as in the other methods, we observe a significant acceleration in the convergence of these iterations. Increasing the range of allowable stepsizes and the tightness of the Lipschitz constants to be more consistent with the empirical results remains an interesting topic of future research.
\section{Conclusion}
We develop a non-Euclidean monotone operator framework with an emphasis on operators which are monotone with respect to finite-dimensional $\ell_1$ and $\ell_\infty$ norms. Many classical algorithms for computing zeros of monotone operators including the forward step method, proximal point method, and splitting methods such as forward-backward splitting and Peaceman-Rachford splitting are directly applicable in our framework and can exhibit improved convergence rates compared to their corresponding algorithms in Euclidean spaces. We apply our results to recurrent neural network equilibrium computation and empirically demonstrate that applying splitting methods yields improved rates of convergence to the equilibria as compared to other methods.

Topics of future research include (i) tightening the Lipschitz estimates of the operator splitting techniques, (ii) extending the results to include infinite-dimensional Banach spaces and set-valued operators $\OF$, and (iii) applying this framework for robustness analysis of control systems and machine learning models.




\bibliographystyle{plainurl+isbn}
\bibliography{alias,Main,FB}

\end{document}